\newcommand{\nn}{\mathbb{N}}
\newcommand{\cc}{\mathbb{C}}
\newcommand{\pp}{\mathbb{P}}
\theoremstyle{plain}
\newtheorem{thm}{Theorem}[section]
\newtheorem{lemma}[thm]{Lemma}
\newtheorem{cor}[thm]{Corollary}
\theoremstyle{definition}
\newtheorem{defn}{Definition}
\newtheorem{example}[thm]{Example}
\newtheorem{problem}[thm]{Problem}
\title{\bf Generalised Beauville Groups}
\author{\bf L. Carta\thanks{Department of Economics, Mathematics and Statistics, Birkbeck, University of London, Malet Street, London, WC1E 7HX. Email: l.carta@bbk.ac.uk}, B. T. Fairbairn\thanks{Department of Economics, Mathematics and Statistics, Birkbeck, University of London, Malet Street, London, WC1E 7HX. Email: b.fairbairn@bbk.ac.uk ORCiD: https://orcid.org/0000-0003-2871-7719}}
\date{}
\begin{document}
\maketitle

\begin{abstract}
A Beauville group acts freely on the product of two compact Riemann surfaces and faithfully on each one of them. In this paper, we consider higher products and present {\it{generalised Beauville groups}}: for $d \geq 2$, $d$ is the minimal value for which the same action can be defined on the product of $d$ compact Riemann surfaces.
\end{abstract}

\textbf{Keywords:} Beauville groups, Beauville surfaces, Riemann surfaces

\section{Introduction}
\label{sec1b}

\begin{defn}
\label{def1}
Let ${\cal{C}}_1$, ${\cal{C}}_2$ be two compact Riemann surfaces ({\it{i.e.}}, algebraic curves), each of genus $g \geq 2$. Let $G$ be a finite group. Consider the $2$-dimensional variety over the field $\cc$ given by the quotient ${\cal{B}} = ({\cal{C}}_1 \times {\cal{C}}_2) / G$. We call ${\cal{B}}$ a {\it{Beauville surface}} of unmixed type if the action of $G$:
\begin{enumerate}
\item is free on ${\cal{C}}_1 \times {\cal{C}}_2$, that is, without fixed points;
\item is faithful on each ${\cal{C}}_i$ (for $i = 1, 2$) as its group of automorphisms.
\end{enumerate}
Notice that an automorphism of $G$ may give rise to non-isomorphic Beauville surfaces (see \cite{Jon1}).
\end{defn}
Beauville's original construction (see \cite[Exercise X.13 (4)]{Bea}) was first generalised and studied by Catanese in \cite{Cat1} (we will not consider Beauville surfaces of mixed type, where some elements of $G$ transpose the two curves ${\cal{C}}_i$). In the above definition, condition 1. is sometimes referred to as ${\cal{B}}$ being {\it{isogenous to a higher product}}, whereas condition 2. is often expressed in terms of {\it{triangle curves}} (${\cal{C}}_i / G$ is isomorphic to the projective line $\pp ^1 (\cc)$ and the covering ${\cal{C}}_i \rightarrow {\cal{C}}_i / G$ is ramified over $\{ 0, 1, \infty \}$, see \cite[p. 226]{Jon4}). In \cite[Theorem 3.3]{Cat2}, Catanese also proved that Beauville surfaces are {\it{rigid}}, {\it{i.e.}}, they do not admit any nontrivial deformation; if we let $G^0 \subset G$ be the subgroup of $G$ of index $\leq 2$ which does not exchange ${\cal{C}}_1$ and ${\cal{C}}_2$, then rigidity is equivalent to $G^0$ satisfying condition 2. In particular, if we have two complex surfaces ${\cal{B}}$ and ${\cal{B}}'$, where ${\cal{B}}$ is a Beauville surface and $\chi ({\cal{B}}') = \chi ({\cal{B}})$ and $\pi _1 {\cal{B}}' \cong \pi _1 {\cal{B}}$, then ${\cal{B}}$ and ${\cal{B}}'$ are diffeomorphic, that is, isomorphic as differentiable real manifolds. By \cite{Bau1, Gon3, Gon4}, it follows that ${\cal{B}}'$ is biholomorphic to ${\cal{B}}$ or its complex conjugate surface $\bar{{\cal{B}}}$, obtained by applying complex conjugation to the coefficients of the polynomials defining ${\cal{B}}$. Notice that for complex algebraic curves it is exactly the opposite: for each $g > 0$ there are uncountably many isomorphism classes of Riemann surfaces of genus $g$, all mutually homeomorphic, and hence with the same Euler characteristic $\chi = 2 - 2 g$, and with isomorphic fundamental groups.\\

\begin{defn}
\label{def2}
Let $G$ be a finite group. For $x, y \in G$, let
\begin{align*}
\Sigma (x, y) := \bigcup _{i = 1} ^{|G|} \bigcup _{k \in G} \lbrace (x^i)^k , (y^i)^k, ((x y)^i)^k \rbrace.
\end{align*}
That is, $\Sigma$ is the union of the conjugates of the cyclic subgroups generated by $x$, $y$, $x y$, respectively. A set of elements $\{ x_1, y_1, x_2, y_2 \} \subset G$ is an unmixed {\it{Beauville structure}} of $G$ if and only if $\langle x_1, y_1 \rangle = \langle x_2, y_2 \rangle = G$ and
\begin{align*}
\Sigma (x_1, y_1) \cap \Sigma (x_2, y_2) = \{ e \}.
\end{align*}
For $i = 1, 2$, if $x_i, y_i$ determine a Beauville structure of $G$, then we call $G$ a {\it{Beauville group}}.
\end{defn}
Observe that since $\Sigma (x_i, y_i)$ is the set of elements of $G$ with fixed points in ${\cal{C}} _i$ (for $i = 1, 2$), the existence of a Beauville structure implies that the action of $G$ is free on ${\cal{C}}_1 \times {\cal{C}}_2$. Traditionally, authors have added a third element $z_i = (x_i y_i)^{-1}$ to every pair $x_i$, $y_i$ in the above formulation to explicitly reflect condition 2 in Definition \ref{def1} and the consequent rigidity of the corresponding Beauville surface: indeed, $x_i$, $y_i$, $z_i$ represent the local monodromy permutations for the covering ${\cal{C}}_i \rightarrow \pp ^1 (\cc)$ over $\{ 0, 1, \infty \}$ ($F = \langle x, y, z : x y z = 1 \rangle$ is the fundamental group of the projective line minus three points).\\
Beauville structures are determined by taking a $2$-generated finite group and checking for trivial intersections of any {\it{two}} sets $\Sigma$ obtained from pairs of generators. Hence, a natural question is to ask whether we can have $2$-generated finite groups for which there are no Beauville structures, but trivial intersections of {\it{more than two}} sets $\Sigma$ can still be found. Before any attempt to answer that, we need to introduce some auxiliary new concepts.

\begin{defn}
\label{def4}
Let $G$ be a finite group. The minimum number $d$ of sets $\Sigma (x_i, y_i)$ of $G$ ($i = 1, \ldots, d$ and $G = \langle x_i, y_i \rangle$) whose intersection is trivial is called the {\it{Beauville dimension}} of $G$. If there exists at least one non-identity element of $G$ which is contained in every set $\Sigma$, then $d = 1$.
\end{defn}

It could be argued that in the last case $d$ should be left undefined. However, in order to determine the Beauville dimension of a generalised Beauville group, we typically go through a series of consecutive natural numbers until the actual value is reached, and thus starting from (or ending at) 1 seemed to be the obvious choice.

\begin{example}
Any Beauville group has Beauville dimension $d = 2$.
\end{example}

\begin{defn}
\label{def5}
Let $G$ be a finite group with Beauville dimension $d \geq 2$. Then, $G$ is a {\it{generalised Beauville group}}. The corresponding {\it{generalised Beauville structures}} are obtained by considering the $d$ generating pairs of elements. Similarly, the resulting {\it{generalised Beauville varieties}} are just the quotients of the products of $d$ algebraic curves by $G$; they are rigid for the same reasons given for Beauville surfaces.
\end{defn}

\begin{example}
\label{example2}
Consider the group $G \cong C_3 \times C_3$.
Let $x$ and $y$ be a generating pair for $G$. Then, the non-trivial elements of the four proper subgroups of $G$ are as follows:
\begin{enumerate}
\item $A = \{x, x^2\}$,
\item $B = \{y, y^2\}$,
\item $C = \{xy, (xy)^2\}$,
\item $D = \{xy^2, x^2y\}$.
\end{enumerate}
Hence, from each set we get a cyclic subgroup of order 3. Now, applying Definition \ref{def2}, we have that
\begin{enumerate}
\item $\Sigma (x, y) = \{ e \} \cup A \cup B \cup C$,
\item $\Sigma (x, xy) = \{ e \} \cup A \cup C \cup D$,
\item $\Sigma (y, xy^2) = \{ e \} \cup A \cup B \cup D$,
\item $\Sigma (y, xy) = \{ e \} \cup B \cup C \cup D$.
\end{enumerate}
As we can easily notice, only the identity element appears in all four sets $\Sigma$, whereas the intersection of any two or three of them is non-trivial.
Thus, $d(G) = 4$. In particular, $C_3 \times C_3$ is the generalised Beauville group of smallest size (see the table in Section \ref{sec3}).
\end{example}
The above example has been known for some time and was brought to the authors' attention by Gareth A. Jones. Starting from that, we wrote algorithms, implemented in GAP (see \cite{gap}), to search for further cases of Beauville groups and structures.

\begin{example}
\label{example3}
Consider the group
\[{G = \langle x,\,y\,|x^9,\,y^9, [x, y]^3, [x^3,y],\,[x,y^3],\,(xy^2)^3,\,(x^2y)^3,(yx)^3x^3y^3\rangle}\]

({\it{i.e.}},
SmallGroup(243, 4) in GAP notation) with the following generating pairs:
\begin{enumerate}
\item $A = \{ x^{- 1} y^3, x^2 y \}$,
\item $B = \{ x^ 3 y^{- 1}, x y^{- 3} \}$,
\item $C = \{ x^{- 1} y, x y \}$.
\end{enumerate}
As it can be easily checked, $G$ is not a Beauville group since any two of $A$, $B$, $C$ have corresponding sets $\Sigma$ with non-trivial intersections. However, $\Sigma(A) \cap \Sigma(B) \cap \Sigma(C) = \{ e \}$. Hence, $d(G) = 3$. This is the first known example of a generalised Beauville group with Beauville dimension equal to $3$. Among all groups of order $243$, the same value works for $H =$ SmallGroup(243, 13). Thus, $G$ and $H$ are the smallest groups with $d = 3$.
\end{example}

We conclude this section by pointing out a key difference between Beauville groups and generalised Beauville groups with $d > 2$. By Definition \ref{def2}, we know that finding a Beauville structure for a group $G$ is sufficient to establish that $G$ is a Beauville group. Nevertheless, a further distinction needs to be made for generalised Beauville structures obtained from more than $2$ pairs of generators.

\begin{defn}
\label{def5c}
Let $G$ be a finite group. For $n > 2$, let ${\cal{S}} = \{ \Sigma_1, \ldots, \Sigma_n \}$ be a generalised Beauville structure for $G$.
\begin{enumerate}[label=\alph*)]
\item If there exists ${\cal{S'}} \subset {\cal{S}}$ such that ${\cal{S'}}$ is also a generalised Beauville structure for $G$, then ${\cal{S}}$ is a {\it{derived}} structure.
\item If there is no ${\cal{S'}} \subset {\cal{S}}$ such that ${\cal{S'}}$ is also a generalised Beauville structure for $G$, then ${\cal{S}}$ is a {\it{non-derived}} structure.
\item If ${\cal{S}}$ is non-derived and there is no smaller generalised Beauville structure for $G$, then ${\cal{S}}$ is a {\it{minimal}} structure.
\end{enumerate}
\end{defn}

Therefore, by Definition \ref{def4}, only minimal structures determine the Beauville dimension of a group.\\

Surveys discussing related geometric and topological
matters are given by Bauer, Catanese and Pignatelli in \cite{BCP1,BCP2}. Other notable
works in the area include \cite{B,F15,JSurv,CombSurv,Wo}.\\

Section \ref{sec2} contains our results and a complete classification of abelian finite groups with respect to their Beauville dimension. Tables displaying all generalised Beauville groups of orders from 1 to 1023 can be found in Section \ref{sec3}. Open problems and further lines of research conclude our study.

\section{Results}
\label{sec2}
In this section, we prove a number of fundamental results which provide the necessary means to start classifying generalised Beauville groups and to construct further examples whose orders are way beyond our computational constraints.

\begin{thm}
\label{thm1}
Let $G$ and $H$ be finite groups such that $\gcd(|G|, |H|) = 1$. For $n > 2$, let $d(G) = n$, and let $H$ be a Beauville group, that is, $d(H) = 2$. Then, the direct product $P = G \times H$ has $d(P) = n$.
\end{thm}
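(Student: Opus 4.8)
The plan is to prove the two inequalities $d(P)\le n$ and $d(P)\ge n$ separately, both resting on structural consequences of the coprimality hypothesis $\gcd(|G|,|H|)=1$. First I would record the basic facts: every element $(g,h)\in P$ satisfies $\langle (g,h)\rangle=\langle g\rangle\times\langle h\rangle$ (as $\gcd(|g|,|h|)=1$), every subgroup of $P$ splits as a direct product of a subgroup of $G$ and a subgroup of $H$, and hence a pair $u=(u_G,u_H)$, $v=(v_G,v_H)$ generates $P$ if and only if $\langle u_G,v_G\rangle=G$ and $\langle u_H,v_H\rangle=H$. Writing $\Gamma_G(a)=\bigcup_{t\in G}\langle a\rangle^t$ for the union of conjugates of the cyclic group generated by $a$ (so that $\Sigma_G(x,y)=\Gamma_G(x)\cup\Gamma_G(y)\cup\Gamma_G(xy)$), the conjugates of $\langle w\rangle=\langle w_G\rangle\times\langle w_H\rangle$ are exactly the $\langle w_G\rangle^s\times\langle w_H\rangle^t$, and taking the union over independent $s,t$ factors. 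This yields, for any generating pair $u,v$ of $P$,
\[
\Sigma_P(u,v)=\bigcup_{w\in\{u,\,v,\,uv\}}\Gamma_G(w_G)\times\Gamma_H(w_H),
\]
where $w_G,w_H$ denote the $G$- and $H$-components of $w$ (so $(uv)_G=u_Gv_G$, $(uv)_H=u_Hv_H$).

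The observation that unlocks everything concerns the two ``axes''. Since $e_G$ and $e_H$ lie in every cyclic subgroup, the displayed formula gives
\[
(g,e_H)\in\Sigma_P(u,v)\iff g\in\Sigma_G(u_G,v_G),\qquad (e_G,h)\in\Sigma_P(u,v)\iff h\in\Sigma_H(u_H,v_H).
\]
For the lower bound $d(P)\ge n$, I would take any generating pairs $(u_1,v_1),\dots,(u_m,v_m)$ of $P$ with $\bigcap_j\Sigma_P(u_j,v_j)=\{e_P\}$. Their $G$-components $(u_{j,G},v_{j,G})$ are generating pairs of $G$; if some $g\ne e_G$ lay in $\bigcap_j\Sigma_G(u_{j,G},v_{j,G})$, the left axis criterion would place $(g,e_H)\ne e_P$ in $\bigcap_j\Sigma_P(u_j,v_j)$, a contradiction. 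Hence these $m$ generating pairs of $G$ have trivial $\Sigma$-intersection, so $m\ge d(G)=n$. Note this direction uses only $d(G)=n$.

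For the upper bound $d(P)\le n$, I would assemble $n$ generating pairs of $P$ from the given data. Fix $n$ generating pairs $(x_i^{(G)},y_i^{(G)})$ of $G$ witnessing $d(G)=n$ and the two pairs of $H$ witnessing $d(H)=2$, and set $u_i=(x_i^{(G)},a_i)$, $v_i=(y_i^{(G)},b_i)$, where $(a_i,b_i)$ is chosen so that \emph{both} $H$-pairs occur among $i=1,\dots,n$ (possible since $n>2$); each $(u_i,v_i)$ generates $P$ by the generation criterion. If $(g,h)\in\bigcap_i\Sigma_P(u_i,v_i)$, then reading off the $G$-coordinate from the formula gives $g\in\bigcap_i\Sigma_G(x_i^{(G)},y_i^{(G)})=\{e_G\}$, so $g=e_G$; then $(e_G,h)\in\Sigma_P(u_i,v_i)$ for all $i$, and the right axis criterion gives $h\in\bigcap_i\Sigma_H(a_i,b_i)\subseteq\{e_H\}$ because both $H$-Beauville pairs appear. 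Thus the intersection is $\{e_P\}$ and $d(P)\le n$, completing the argument.

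The step I expect to be the main obstacle is passing from the ``union of products'' shape of $\Sigma_P(u,v)$ to workable statements: because $\Sigma_P$ is a union of the products $\Gamma_G(w_G)\times\Gamma_H(w_H)$ rather than a single product, one cannot naively factor the intersection over the two coordinates. The axis criteria are precisely what circumvent this, and verifying them together with the conjugacy and generation facts they depend on is where the coprimality hypothesis does all of its work.
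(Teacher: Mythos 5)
Your proof is correct and follows essentially the same route as the paper's: build the $n$ witnessing pairs of $P$ by pairing the $n$ witnessing pairs of $G$ with the two Beauville pairs of $H$ (arranging that both occur), use coprimality to factor cyclic subgroups and hence the sets $\Sigma$ through the two coordinates, kill the $H$-coordinate via the Beauville property, and get the lower bound by projecting any witnessing family of generating pairs of $P$ onto $G$. If anything, your write-up is tighter than the paper's: your matched-product formula $\Sigma_P(u,v)=\bigcup_{w\in\{u,v,uv\}}\Gamma_G(w_G)\times\Gamma_H(w_H)$ makes precise what the paper states only loosely as a product of the two coordinate-wise unions.
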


\begin{proof}
First of all, the fact that $\gcd(|G|, |H|) = 1$ guarantees that $P$ too can be $2$-generated. Since $d(G) = n$, without loss of generality, we can find generating pairs $\{ x_1, y_1 \}$, $\{ x_2, y_2 \}$, $\ldots$, $\{ x_n, y_n \}$ for $G$ such that $\Sigma (x_1, y_1) \cap \Sigma (x_2, y_2) \cap \ldots \cap \Sigma (x_n, y_n)$ is trivial. Similarly, as $H$ is a Beauville group, we can find generating pairs $\{ u_1, v_1 \}$, $\{ u_2, v_2 \}$ such that $\Sigma (u_1, v_1) \cap \Sigma (u_2, v_2)$ is trivial. Now, since $\gcd(|G|, |H|) = 1$, for $g_1, g_2 \in G$ and $h_1, h_2 \in H$ we can find $p, q \in P$ such that $p = (g_1, h_1)$, $q = (g_2, h_2)$ and $p q = (g_1 g_2, h_1 h_2)$. Then, $\Sigma (p, q) = \bigcup (g, h)$ for all $g \in (g_1 ^i)^G \cup (g_2 ^i)^G \cup ((g_1 g_2)^i)^G$ for $1 \leq i \leq |G|$, and all $h \in (h_1 ^j)^H \cup (h_2 ^j)^H \cup ((h_1 h_2)^j)^H$ for $1 \leq j \leq |H|$. Hence, $\Sigma (p, q)$ contains $\Sigma (g_1, g_2)$ and $\Sigma (h_1, h_2)$. In particular, without loss of generality, consider $\Sigma (p_1, q_1)$, which contains $\Sigma (x_1, y_1)$ and $\Sigma (u_1, v_1)$, and $\Sigma (p_2, q_2)$, which contains $\Sigma (x_2, y_2)$ and $\Sigma (u_2, v_2)$. Clearly, the elements belonging to their intersection are of the form $(g, h)$, or $(g, e)$, or $(e, h)$ for any $g \in \Sigma (x_1, y_1) \cap \Sigma (x_2, y_2)$ and $h \in \Sigma (u_1, v_1) \cap \Sigma (u_2, v_2)$. However, $H$ is a Beauville group, and so $(e, h) = (e, e)$ and $(g, h) = (g, e)$. Thus, $\Sigma (p_1, q_1) \cap \Sigma (p_2, q_2) \cong \Sigma (x_1, y_1) \cap \Sigma (x_2, y_2)$. That is, non-empty intersections of sets $\Sigma$ in $G$ map to isomorphic non-empty intersections of sets $\Sigma$ in $P$, which also ensures that we cannot have any values of $d(P)$ smaller than $n$. Therefore, $d(P) = n$.
\end{proof}

\begin{example}
\label{example4}
Looking at the table in Section \ref{sec3}, we can easily spot SmallGroup(225, 6) being the direct product of SmallGroup(9, 2) and SmallGroup(25, 2), or SmallGroup(675, 14) being the direct product of SmallGroup(25, 2) and SmallGroup(27, 3). Moreover, by \cite{Fue}, the Suzuki group $Sz(q)$ (where $q$ is an odd power of $2$) is a Beauville group. Hence, if we set $H = Sz(q)$ in Theorem \ref{thm1} and take the direct product between $H$ and SmallGroup(243, 4), or SmallGroup(9, 2), then we obtain an infinite source of non-soluble generalised Beauville groups with $d = 3$, or $d = 4$, respectively.
\end{example}

We now consider {\it{lifts}} of generalised Beauville structures by extending a result proved in \cite[Lemma 4.2]{Fue}.

\begin{defn}
\label{def6}
Let $G$ be a group and $N$ a normal subgroup of $G$.  An element $g \in G$ is {\it{faithfully represented}} in $G / N$ if $\langle g \rangle \cap N = 1$, or, equivalently, if $g$ has the same order in $G$ as its image in $G / N$. Accordingly, a triple in $G$ is faithfully represented in $G / N$ if each of its elements is faithfully represented in $G/N$.
\end{defn}

\begin{lemma}
\label{lem2}
Let $G$ be a finite group with generating triples $(x_i, y_i, z_i)$ with $x_iy_iz_i = 1$ for $1 \leq i \leq n$, and a normal subgroup $N$ such that at least one of these triples is faithfully represented in $G / N$. If the images of these triples correspond to a generalised Beauville structure for $G/N$, then these triples correspond to a generalised Beauville structure for $G$.
\end{lemma}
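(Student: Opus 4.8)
The plan is to reduce the whole statement to the single set-theoretic fact that $\bigcap_{i=1}^{n}\Sigma(x_i,y_i)=\{e\}$ in $G$. The generation condition $\langle x_i,y_i\rangle=G$ is already built into the hypothesis that the $(x_i,y_i,z_i)$ are generating triples, and it forces $\langle\bar x_i,\bar y_i\rangle=G/N$ (bars denoting images in $G/N$), so the structure downstairs is genuinely generating as well; hence no separate generation argument is needed either upstairs or downstairs.

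The key observation I would establish first is that the quotient map $\pi\colon G\to G/N$ carries each $\Sigma(x_i,y_i)$ into $\Sigma(\bar x_i,\bar y_i)$. This is because $\pi$ sends powers to powers and conjugates to conjugates, and $\pi(x_iy_i)=\bar x_i\bar y_i$, so a typical element $(w^{\,j})^{k}$ of $\Sigma(x_i,y_i)$ with $w\in\{x_i,y_i,x_iy_i\}$ maps to $(\bar w^{\,j})^{\bar k}\in\Sigma(\bar x_i,\bar y_i)$. Consequently, if $g\in\bigcap_{i=1}^n\Sigma(x_i,y_i)$ then $\pi(g)\in\bigcap_{i=1}^n\Sigma(\bar x_i,\bar y_i)$, and since the images form a generalised Beauville structure for $G/N$, this intersection is trivial. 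Thus $\pi(g)=\bar e$, i.e. $g\in N$.

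It remains to kill such a $g$ using the faithfully represented triple, say $(x_{i_0},y_{i_0},z_{i_0})$. Since $g\in\Sigma(x_{i_0},y_{i_0})$, I can write $g=(w^{\,j})^{k}$ with $w\in\{x_{i_0},y_{i_0},x_{i_0}y_{i_0}\}$ for some exponent $j$ and conjugator $k$; note that $x_{i_0}y_{i_0}=z_{i_0}^{-1}$, so $\langle x_{i_0}y_{i_0}\rangle=\langle z_{i_0}\rangle$ and this third case is governed by $z_{i_0}$. Because $N$ is normal, $g\in N$ forces $w^{\,j}\in N$, hence $w^{\,j}\in\langle w\rangle\cap N$. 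Faithful representation of the triple means precisely that $\langle w\rangle\cap N=1$ for each of these three cyclic subgroups, so $w^{\,j}=e$ and therefore $g=e$, as required.

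The point I expect to require the most care is the interplay between the three components of $\Sigma$ and the hypothesis, which is phrased in terms of triples: one must observe that the ``third'' conjugacy class in $\Sigma(x_{i_0},y_{i_0})$ comes from $x_{i_0}y_{i_0}$ and that faithfulness of $z_{i_0}$ (rather than of $x_{i_0}y_{i_0}$ directly) is exactly what controls it, via $\langle x_{i_0}y_{i_0}\rangle=\langle z_{i_0}\rangle$. Everything else---stripping the conjugation using normality, and pushing the problem down to $G/N$ via $\pi$---is formal. It is also worth noting that only one faithfully represented triple is needed: the intersection argument alone already confines $g$ to $N$, after which a single faithful $\Sigma$ suffices to eliminate it.
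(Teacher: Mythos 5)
Your proposal is correct and follows essentially the same route as the paper's proof: push an element of $\bigcap_i \Sigma(x_i,y_i)$ down to $G/N$, use triviality of the intersection there to conclude it lies in $N$, and then use faithful representation of the distinguished triple to force it to be the identity. You are in fact somewhat more careful than the paper, which only treats elements of the form $x_1^j$ and leaves implicit both the stripping of the conjugator via normality of $N$ and the identification $\langle x_{i_0}y_{i_0}\rangle=\langle z_{i_0}\rangle$ that you spell out.
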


\begin{proof}
Assume, without loss of generality, that the triple $(x_1, y_1, z_1)$ is faithfully represented in $G / N$ and that $x_1 ^j$ is conjugate in $G$ to a power of $x_i$, $y_i$, or $z_i$, for every $i$. Since the quotient map is a homomorphism, the image of $x_1 ^j$ in $G / N$ is conjugate in $G / N$ to a power of the image of $x_i$, $y_i$, or $z_i$, for every $i$. Now, $G / N$ is a generalised Beauville group, so this image can only be the identity (which is only conjugate to itself, so also the image of $x_1 ^j$ is the identity). Hence, $x_1 ^j \in N$ (as elements of $G$ which map to the identity of $G / N$ belong to $N$). Moreover, by Definition \ref{def6}, $\langle x_1 ^j \rangle \cap N = 1$, which implies that $x_1 ^j = 1$. Alternatively, by the same definition, $x_1 ^j$ needs to have the same order in $G$. Therefore, there exists a generalised Beauville structure for $G$.
\end{proof}

\begin{lemma}
\label{lem2b}
Let ${\cal{S}}$ be a generalised Beauville structure with $n$ pairs of generators for a group $G$ ($n \geq 2$). Then, $2 \leq d(G) \leq n$.
\end{lemma}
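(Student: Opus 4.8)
The plan is to read off both inequalities directly from Definition \ref{def4}, handling the upper and lower bounds separately, since the statement is essentially a bookkeeping consequence of how $d(G)$ is defined.

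For the upper bound $d(G) \leq n$, I would observe that the hypothesis already supplies a witnessing collection. By definition of a generalised Beauville structure, the $n$ pairs of ${\cal S}$ are generating pairs $\{x_i, y_i\}$ of $G$ whose associated sets satisfy $\Sigma(x_1, y_1) \cap \cdots \cap \Sigma(x_n, y_n) = \{e\}$. Since $d(G)$ is, by Definition \ref{def4}, the \emph{minimum} number of such sets whose intersection is trivial, and we have exhibited exactly $n$ of them achieving this, the minimum cannot exceed $n$. Hence $d(G) \leq n$.

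For the lower bound $d(G) \geq 2$, I would first note that the existence of ${\cal S}$ forces $G$ to be non-trivial, since a generating pair of the trivial group yields only $\{e\}$ and there would be nothing to intersect. Then for any single generating pair $\{x, y\}$ of $G$, at least one of $x$, $y$, $xy$ is a non-identity element, so $\Sigma(x, y)$ properly contains $\{e\}$; thus no single set $\Sigma$ has trivial intersection and one pair can never suffice. It remains to exclude the convention $d = 1$ of Definition \ref{def4}, which applies precisely when some non-identity element of $G$ lies in \emph{every} set $\Sigma$. But the sets coming from ${\cal S}$ already intersect trivially, so no such universally shared non-identity element exists; we are therefore in the regime where $d(G)$ is the genuine minimum number of sets with trivial intersection, and that minimum is at least $2$.

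Combining the two bounds gives $2 \leq d(G) \leq n$. The only step requiring care, and the closest thing to an obstacle, is the interaction with the $d = 1$ convention: one must verify that the existence of the structure ${\cal S}$ is exactly what rules out a globally shared non-identity element, so that the "minimum" definition of $d(G)$ is the operative one rather than the convention. Once this is pinned down, the argument is immediate.
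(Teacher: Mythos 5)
Your proof is correct, and it takes a genuinely more direct route than the paper's. The paper argues by cases: for $n = 2$ it invokes Definition \ref{def2} to get $d(G) = 2$, and for $n > 2$ it splits according to whether ${\cal{S}}$ is minimal, derived, or non-derived in the sense of Definition \ref{def5c}, concluding $d(G) = n$ in the minimal case and asserting $2 \leq d(G) \leq n$ in the other two. You bypass Definition \ref{def5c} entirely and read both bounds off Definition \ref{def4}: the $n$ sets of ${\cal{S}}$ witness that the minimum is at most $n$, while the trivial intersection of those sets rules out the $d = 1$ convention, and the fact that no single $\Sigma$ can equal $\{e\}$ rules out a genuine minimum of $1$. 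This is arguably tighter than the paper's treatment, since in the derived and non-derived cases the paper simply asserts the lower bound $2 \leq d(G)$ --- which is precisely the point your analysis of the $d = 1$ convention makes explicit. What the paper's route buys is a link to the derived/non-derived/minimal taxonomy that it exploits elsewhere (e.g.\ Example \ref{example5}); what your route buys is self-containment and an explicit check that the convention in Definition \ref{def4} cannot apply. The one soft spot in your write-up is the justification that ${\cal{S}}$ forces $G$ to be non-trivial (``there would be nothing to intersect'' is not quite an argument); the clean statement is that if $\Sigma(x,y) = \{e\}$ for a generating pair, then $x = y = e$ and so $G = \langle x, y \rangle$ is trivial --- a degenerate case tacitly excluded throughout the paper, so this does not affect correctness.
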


\begin{proof}
If $n = 2$, then, by Definition \ref{def2}, $d(G) = 2$, and so $G$ is a Beauville group. Otherwise, if $n > 2$, then we have to consider the three cases described in Definition \ref{def5c}. If ${\cal{S}}$ is minimal, then $d(G) = n$, whereas if ${\cal{S}}$ is either a derived or a non-derived structure, then $2 \leq d(G) \leq n$. Thus, combining these observations, we get that $2 \leq d(G) \leq n$.
\end{proof}

\begin{cor}
\label{cor3}
Let $G$ be a finite group which satisfies the conditions of Lemma \ref{lem2}. Then, $2 \leq d(G) \leq n$.
\end{cor}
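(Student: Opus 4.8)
The plan is to chain the two preceding lemmas directly, since the hypothesis that $G$ satisfies the conditions of Lemma \ref{lem2} already hands us everything required. First I would invoke Lemma \ref{lem2}: by assumption $G$ carries $n$ generating triples $(x_i, y_i, z_i)$ with $x_i y_i z_i = 1$, together with a normal subgroup $N$ in which at least one of these triples is faithfully represented, and the images of the triples form a generalised Beauville structure for $G/N$. Lemma \ref{lem2} then guarantees that these same triples constitute a generalised Beauville structure for $G$ itself.

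Next I would translate this conclusion into the language of Lemma \ref{lem2b}. Each generating triple $(x_i, y_i, z_i)$ with $x_i y_i z_i = 1$ is determined by the pair $(x_i, y_i)$, since $z_i = (x_i y_i)^{-1}$; this is precisely the correspondence between triples and generating pairs described in the discussion following Definition \ref{def2}. Hence the generalised Beauville structure produced in the previous step is a structure with $n$ pairs of generators for $G$, and by the hypotheses of Lemma \ref{lem2} we have $n \geq 2$. Applying Lemma \ref{lem2b} to this structure of $n$ generating pairs then immediately yields $2 \leq d(G) \leq n$, as required.

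Since this is a corollary assembled from two results already established, there is no genuine obstacle to overcome. The only point demanding a moment's care is the bookkeeping: one must confirm that the integer $n$ appearing in Lemma \ref{lem2} is the same $n$ entering Lemma \ref{lem2b}, and that the \emph{triples} of the former are correctly identified with the \emph{pairs of generators} of the latter. Once that identification is made explicit, the bound follows at once.
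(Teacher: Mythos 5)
Your proof is correct and follows exactly the route the paper intends: the corollary is stated immediately after Lemma \ref{lem2} and Lemma \ref{lem2b} precisely so that it follows by chaining them, with Lemma \ref{lem2} producing a generalised Beauville structure of $n$ generating pairs (triples and pairs being interchangeable via $z_i = (x_i y_i)^{-1}$) and Lemma \ref{lem2b} then giving $2 \leq d(G) \leq n$. Your extra care in identifying the $n$ of the two lemmas and the triple--pair correspondence is sound bookkeeping, not a deviation.
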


\begin{example}
\label{example5a}
If we remove the relation $(yx)^3x^3y^3$ from the presentation of SmallGroup(243, 4) given in Example \ref{example3}, we obtain SmallGroup(729, 9). Since any minimal structure for the first group can be lifted to a minimal structure for the second one, both groups have the same Beauville dimension $d = 3$.
\end{example}

\begin{example}
\label{example5}
Consider the groups $G = $ SmallGroup(729, 34) and $H = $ SmallGroup(81, 9), with the following presentations:
\begin{align*}
G = \langle x, y : x^3 &= y^3 = (y^2 x y^2 x^2)^3 = (y^2 x^2 y x^2)^3\\
                                 &= (y^2 x^2 y x)^3 = (y^2 x)^2 y x (y x^2)^2 (y x)^2 = 1 \rangle,
\end{align*}
\begin{align*}
H = \langle s, t : s^3 = t^3 = (s t)^3 = s t^2 (s t^2 s^2 t)^2 s^2 t = 1 \rangle.
\end{align*}
Observe that $H = G / N$, where $N \cong C_3 \times C_3$ is the normal subgroup of $G$ generated by $(y x)^3$ and $y x y^2 x (y x^2)^3 y^2 x y$. Now, let $v = (t^2 s)^2$ and $w = t s (t s^2)^2$. The quotient map $f : G \rightarrow H$ sends $x$ to $w$ and $x^2 y$ to $v$. As it can be easily verified, the orders of $x$, $x^2 y$, $y^2$ in $G$ are the same as the orders of $w$, $v$, $(w v)^{-1}$ in $H$: $3$, $9$, $3$, respectively. Hence, the triple $T = \{ x, x^2 y, y^2 \}$ is faithfully represented in $H$ by the triple $T' = \{ w, v, (w v)^{-1} \}$. Moreover, there exists at least one generalised Beauville structure ${\cal{S}}$ for $H$ which has $T'$ as one of its four generating sets:
\begin{align*}
A' &= \Sigma (s, t),\\
B' &= \Sigma (t, w),\\
C' &= \Sigma (t, v),\\
D' &= \Sigma(T').
\end{align*}
By Lemma \ref{lem2}, we know there is a generalised Beauville structure ${\cal{S'}}$ with four generating sets for $G$ involving $T$ as one of them. An example is
\begin{align*}
A &= \Sigma(x, y),\\
B &= \Sigma(x, x y),\\
C &= \Sigma(y, x^2 y),\\
D &= \Sigma(T).\\
\end{align*}
However, $d(H) = 4$, whereas $d(G) = 2$. Thus, by ``lifting" a minimal structure, we have obtained a non-derived one. This confirms that if we are not exclusively dealing with Beauville groups, then Lemma \ref{lem2} can only provide lower and upper bounds, not the exact value of $d$.
\end{example}

Now, in order to classify all finite abelian generalised Beauville groups by proving the main theorem of this section, we will use the following result from \cite[Theorem 3.4]{Bau1} and \cite[Theorem 11.1]{Jon4}.

\begin{thm}
\label{thm2}
Let $G$ be a finite abelian group. Then, $G$ is a Beauville group if and only if $G \cong C_n \times C_n$ where $n > 1$ and $\gcd(n, 6) = 1$.
\end{thm}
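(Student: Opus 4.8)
The plan is to prove both directions prime by prime, pushing everything down to the elementary-abelian quotient and the socle of each Sylow subgroup. Throughout I write $G$ additively and exploit that, since conjugation is trivial, $\Sigma(x,y)=\langle x\rangle\cup\langle y\rangle\cup\langle x+y\rangle$; thus a Beauville structure $(x_1,y_1),(x_2,y_2)$ is exactly a pair of generating pairs for which all nine cross-intersections $\langle a\rangle\cap\langle b\rangle$, with $a\in\{x_1,y_1,x_1+y_1\}$ and $b\in\{x_2,y_2,x_2+y_2\}$, are trivial. A $2$-generated abelian group is $C_m\times C_n$ with $m\mid n$, and the cyclic case $m=1$ is discarded at the outset: since $\zz/n$ has a \emph{unique} subgroup of order $p$ for each prime $p\mid n$, that subgroup lies inside both $\Sigma_1$ and $\Sigma_2$, so no Beauville structure can exist.

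For necessity I would first force $m=n$. Fix a prime $p$, set $c:=v_p(n)$, and suppose $v_p(m)<c$. The homomorphism $\phi:G\to\zz/p$ reading off the top $\zz/p^{c}$-coordinate modulo $p$ is onto, so no generating pair can lie entirely in $\ker\phi$; hence for each $i$ at least one of $x_i,y_i,x_i+y_i$, call it $w_i$, has full $p$-part order $p^{c}$. Because $v_p(m)<c$, the unique order-$p$ subgroup of $\langle w_i\rangle$ is pinned to the single subgroup $\langle t\rangle$ inside the top factor, \emph{independently of} $i$, whence $t\in\Sigma_1\cap\Sigma_2$ is a common non-identity element — a contradiction. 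This gives $v_p(m)=v_p(n)$ for every $p$, i.e.\ $G\cong C_n\times C_n$. With $G$ now homocyclic I would obtain $\gcd(n,6)=1$ by the same mechanism at each $p\mid n$: pass to $V:=G/pG\cong(\zz/p)^2$, observe that each pair maps to three \emph{distinct} lines of $V$ (the images of $x_i,y_i$ form a basis, and their sum is a third line), and show that any coincidence between a line of the first triple and a line of the second yields a common order-$p$ element upstairs. The six lines must therefore be distinct, forcing $p+1\ge 6$, i.e.\ $p\ge 5$.

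The main obstacle is precisely this last inference, because $G\to G/pG$ is far from injective, so triviality of $\Sigma_1\cap\Sigma_2$ says nothing a priori about the images. The device that rescues both steps is the canonical isomorphism $G_p/pG_p\xrightarrow{\sim}G_p[p]$, $\bar g\mapsto p^{c-1}g$, for $G_p$ homocyclic of exponent $p^{c}$, which transports an equality of lines downstairs into an equality of genuine order-$p$ subgroups upstairs, hence into a shared non-identity element. Getting this correspondence and the full-order bookkeeping right — and checking that two cyclic subgroups generated by vectors with distinct slopes modulo $p$ really do meet trivially — is where the care lies.

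For sufficiency I would build a structure on $C_n\times C_n$ one prime at a time and assemble it by the Chinese Remainder Theorem. For each prime power $p^{c}\parallel n$ (so $p\ge 5$) I take the pair $(1,0),(0,1)$, whose three slopes are $0,\infty,1$, and a second generating pair realising three further distinct elements of $\pp^1(\mathbb{F}_p)$, which is possible since $\pp^1(\mathbb{F}_p)$ has $p+1\ge 6$ points. By the slope criterion all nine local cross-intersections are trivial; since a tuple generates $\prod_p G_p$ exactly when it generates each $G_p$, the CRT-assembled pairs generate $C_n\times C_n$, and any element of $\Sigma_1\cap\Sigma_2$ would have, for one fixed cross pair, every $p$-component in a trivial local cross-intersection, hence be the identity. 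This produces the required Beauville structure and completes the equivalence.
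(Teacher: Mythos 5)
Your proposal is correct, but there is nothing in the paper to compare it against: the paper never proves Theorem \ref{thm2}. It imports the statement wholesale from Bauer--Catanese--Grunewald \cite[Theorem 3.4]{Bau1} and Jones--Wolfart \cite[Theorem 11.1]{Jon4}, and only uses it as an ingredient in Theorem \ref{thm3} and Corollary \ref{cor1}. Judged on its own, your argument is a sound, self-contained elementary proof in the same spirit as those external sources: the identification $\Sigma(x,y)=\langle x\rangle\cup\langle y\rangle\cup\langle x+y\rangle$ in the abelian case is right; the elimination of cyclic groups via the unique order-$p$ subgroup is right; the forcing of $m=n$ works because the order-$p$ subgroup of $\langle w_i\rangle$ really does land in the fixed socle line of the top factor whenever $v_p(m)<v_p(n)$; the transport of a line coincidence in $G/pG$ to a shared order-$p$ subgroup upstairs via the socle isomorphism $G_p/pG_p\cong G_p[p]$ is valid precisely because at that stage $G_p$ is already known to be homocyclic; and the count $p+1\ge 6$ then kills $p=2,3$. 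The CRT assembly for sufficiency is also correct, since for abelian groups both generation and triviality of the cross-intersections can be checked one primary component at a time. Two details deserve explicit treatment in a final write-up. First, the existence of a second local generating pair hitting three \emph{new} points of $\pp^1(\mathbb{F}_p)$ does not follow from the count $p+1\geq 6$ alone, because the third slope is determined by the first two; an explicit witness such as $(1,2),(1,4)$, with slopes $2,4,3$ and determinant $2$ a unit mod $p^c$, settles it (or one can note that any three distinct prescribed lines are realised by some pair, intersecting $\ell_3$ with the coset $\bar u+\ell_2$). Second, the ``slope criterion'' for sufficiency -- distinct lines mod $p$ force $\langle u\rangle\cap\langle v\rangle=0$ upstairs -- should be stated as the contrapositive of your coincidence lemma: a nontrivial intersection of two cyclic subgroups of order $p^c$ contains the unique order-$p$ subgroup of each, hence identifies them, hence identifies the lines. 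Both are routine, so the proof stands.
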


The crucial part is then to show that for a finite abelian generalised Beauville group $G$ $d(G) \in \{ 2, 4 \}$.

\begin{thm}
\label{thm3}
Let $H$ be a finite abelian group with $d(H) > 2$. Then, $d(H) = 4$ and $ H = C_n \times C_n \times G$, where $n$ is a power of $3$ and $G$ is either the trivial group, or an abelian Beauville group.
\end{thm}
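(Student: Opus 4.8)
The plan is to reduce to $H \cong C_N \times C_N$ with $N$ odd, to isolate the $3$-part, and then to compute the Beauville dimension of $C_{3^k}\times C_{3^k}$ by a combinatorial argument on its subgroups of order $3$.

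First I would use $d(H) > 2$ to constrain the isomorphism type of $H$. As a $2$-generated abelian group, $H \cong \prod_p (C_{p^{a_p}} \times C_{p^{b_p}})$ with $a_p \leq b_p$. I claim that unless every Sylow subgroup has the shape $C_{p^{k}}\times C_{p^{k}}$ with $p$ odd and $k \geq 1$, a fixed non-identity element lies in $\Sigma(x,y)$ for \emph{every} generating pair $(x,y)$, whence $d(H) = 1$, contradicting the hypothesis. If $a_p < b_p$ (in particular if $H_p$ is cyclic), then projecting $H$ onto its direct factor $C_{p^{b_p}}$ — a quotient of $H$ — shows that at least one of $x,y$ has a component of order $p^{b_p}$, and for that element $u$ the subgroup $L_0 := p^{b_p-1}H_p \cong C_p$ satisfies $L_0 \subseteq \langle u\rangle \subseteq \Sigma(x,y)$. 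If instead $p = 2$ and $a_p = b_p$, then the three involutions of $\Omega_1(H_2)\cong C_2\times C_2$ are all covered by every generating pair (the images of $x,y,xy$ in $H_2/\Phi(H_2)\cong C_2\times C_2$ are pairwise independent, so they run through all three subgroups of order $2$), so a fixed involution lies in every $\Sigma$. Ruling these out leaves $H \cong C_N\times C_N$ with $N$ odd. By Theorem \ref{thm2}, if $\gcd(N,6)=1$ then $d(H)=2$, again contradicting the hypothesis; hence $3\mid N$, and writing $N = 3^k M$ with $\gcd(M,6)=1$ we have $H \cong (C_{3^k}\times C_{3^k})\times(C_M\times C_M)$.

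The crux is to prove $d(K)=4$ for $K := C_{3^k}\times C_{3^k}$. For a generating pair $(x,y)$ each of $x,y,xy$ lies outside $\Phi(K)=3K$ and so has maximal order $3^k$; thus $\Sigma(x,y)$ is a union of three maximal cyclic subgroups, each containing a unique subgroup of order $3$, its \emph{bottom line} $\ell(u):=\langle u^{3^{k-1}}\rangle \subseteq \Omega_1(K)\cong C_3\times C_3$. The key lemma I would establish is that, for generating pairs $(x_i,y_i)$, the intersection $\bigcap_i \Sigma(x_i,y_i)$ is non-trivial if and only if the sets $B_i := \{\ell(x_i),\ell(y_i),\ell(x_iy_i)\}$ have a common member (each $B_i$ has three elements, since $\bar x_i,\bar y_i,\bar x_i+\bar y_i$ are pairwise independent). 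The easy direction is that a shared bottom line is a common subgroup of order $3$; for the converse, a common non-identity element $g$ forces its bottom line $\ell(g)$ to coincide, for each $i$, with the bottom line of whichever of $x_i,y_i,x_iy_i$ generates a cyclic group containing $g$, so $\ell(g)\in B_i$ for all $i$. Since $\ell(u)$ depends only on the image of $u$ in the Frattini quotient $\bar K \cong C_3\times C_3$, and the identification $\Omega_1(K)\cong \bar K$ carries $\ell(u)$ to the line spanned by $\bar u$, each $B_i$ is exactly the set of three lines of $\mathbb{F}_3^2$ met by the $C_3\times C_3$-pair $(\bar x_i,\bar y_i)$. This reduces the whole question to the four-line combinatorics of Example \ref{example2}: there are four bottom lines and every $B_i$ omits exactly one, so any three pairs omit at most three lines and share one, giving $d(K)\geq 4$; conversely, lifting the four pairs of Example \ref{example2} (whose line-sets together omit all four lines) to generating pairs of $K$ — any lift generates, by the Frattini property — yields four pairs with $\bigcap_i B_i = \emptyset$, hence $\bigcap_i \Sigma = \{e\}$ and $d(K)\leq 4$.

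To finish: if $M = 1$ then $H = K$ and $d(H)=4$ with $G$ trivial; if $M>1$ then $C_M\times C_M$ is an abelian Beauville group by Theorem \ref{thm2}, its order is coprime to that of $K$, and Theorem \ref{thm1} (applied with $d(K)=4>2$) gives $d(H)=4$. In either case $H = C_n\times C_n\times G$ with $n = 3^k$ a power of $3$ and $G$ trivial or an abelian Beauville group, as required. I expect the main obstacle to be the key lemma of the previous paragraph — translating non-trivial intersections of the sets $\Sigma$ into a common bottom line — especially for $k\geq 2$, where the shared subgroups of order $3$ lie inside $\Phi(K)$ and are therefore invisible in the Frattini quotient; once this is in place, both bounds on $d(K)$ follow from pigeonhole and from lifting the $C_3\times C_3$ structure.
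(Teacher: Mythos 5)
Your proof is correct, and its skeleton is the same as the paper's: reduce to the homocyclic case $C_n\times C_n$ with $n$ an odd multiple of $3$, compute $d(C_{3^k}\times C_{3^k})=4$ by the four-lines pigeonhole, and dispatch $n=3^k r$ with $\gcd(r,6)=1$ via Theorem~\ref{thm1}. The difference lies in how the individual steps are discharged, and your version is substantially more self-contained. For the reduction, the paper quotes \cite[Theorem 11.1]{Jon4} for non-homocyclic groups and merely asserts the even case (``it is not difficult to verify''), whereas you give direct arguments: the canonical subgroup $p^{b_p-1}H_p$ of order $p$ lies in every $\Sigma$ when the Sylow $p$-subgroup is unbalanced, and all three involutions of $\Omega_1(H_2)$ lie in every $\Sigma$ when it is balanced with $p=2$. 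More importantly, for $K=C_{3^k}\times C_{3^k}$ the paper counts the eight elements of order $3$, notes that each $\Sigma$ contains exactly six of them, and points to Example~\ref{example2}; that yields the lower bound $d(K)\geq 4$, but for $k\geq 2$ it leaves the upper bound implicit --- one still has to exclude shared elements of order greater than $3$ and exhibit four generating pairs of $K$ itself. Your bottom-line lemma (any non-identity element common to all the $\Sigma_i$ forces a common line in $\Omega_1(K)$) together with Frattini lifting of the four pairs of Example~\ref{example2} closes exactly that gap, which is what your self-contained treatment buys over the paper's shorter, citation-based proof. The only blemish is notational: $\ell(g)=\langle g^{3^{k-1}}\rangle$ is the trivial group when $g$ has order less than $3^k$, so in the converse direction of your lemma you should define $\ell(g)$ as the unique subgroup of order $3$ of $\langle g\rangle$; with that reading, the argument is complete.
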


\begin{proof}
Suppose $d(H) > 2$. If $H \ncong C_n \times C_n$, then, by Theorem \ref{thm2}, $H$ cannot be a Beauville group. The result is proved in \cite[Theorem 11.1]{Jon4} by showing that the intersection of all possible sets $\Sigma$ is non-trivial, which implies that $H$ is not a generalised Beauville group either. So, let $H \cong C_n \times C_n$. If $n$ is even, then it is not difficult to verify that $d(H) = 1$. Hence, by this observation and Theorem \ref{thm2}, we may assume that $n$ is an odd multiple of $3$. If $n = 3^k$, then $H$ has exactly $8$ elements of order $3$ and precisely $6$ of them are contained in each set $\Sigma$ ($C_{3^k}$ has $2$ elements of order $3$, namely $g^l$ and $g^{2 l}$, for $c$ generating the group and $l = 3^{k - 1}$); accordingly, given that $4$ is the smallest number of sets $\Sigma$ we need to have a trivial intersection, $d(H) = 4$ (see Example \ref{example2}). Otherwise, $n = 3^k r$ for some $r$ with $\gcd(r, 6) = 1$. Thus, $H = C_{3^k} \times C_{3^k} \times G$, where $G$ is an abelian Beauville group of order $r^2$ ({\it{i.e.}}, $C_r \times C_r$). By Theorem \ref{thm1}, $d(H) = 4$. This completes the proof.
\end{proof}

From Theorem \ref{thm2} and the previous result, a complete classification of finite abelian groups with respect to their Beauville dimension easily follows.

\begin{cor}
\label{cor1}
Let $G$ be a finite abelian group. Then, $d(G) \in \{ 1, 2, 4 \}$. In particular,\\
- if $d(G) = 1$, then $G$ is not a generalised Beauville group;\\
- if $d(G) = 2$, then $G$ is a Beauville group;\\
- if $d(G) = 4$, then $G$ is a generalised Beauville group of the form $C_n \times C_n \times H$, where $n$ is a power of $3$ and $H$ is either the trivial group or an abelian Beauville group.
\end{cor}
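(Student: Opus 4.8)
The plan is to read the corollary off from Theorem \ref{thm2} and Theorem \ref{thm3} by a three-way case analysis on the value of $d(G)$; essentially no new computation is required, so the work is one of organising the logic and checking that the cases are exhaustive. Since the corollary is billed as following ``easily'' from the two preceding theorems, I would keep the argument to bookkeeping rather than reproving anything.

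First I would recall from Definition \ref{def4} that $d(G) \geq 1$ for every finite group, with $d(G) = 1$ exactly when some non-identity element of $G$ lies in every set $\Sigma(x, y)$ as $(x,y)$ ranges over generating pairs. This is precisely the statement that no collection of sets $\Sigma$ has trivial intersection, i.e.\ that $G$ is not a generalised Beauville group, which yields the first bullet. If instead $d(G) \geq 2$, then $G$ is a generalised Beauville group, and I would split into two subcases. When $d(G) = 2$, Definition \ref{def2} says that two sets $\Sigma$ meet only in the identity, which is exactly the definition of a Beauville group, giving the second bullet. When $d(G) > 2$, I would invoke Theorem \ref{thm3}: since $G$ is abelian with $d(G) > 2$, that theorem forces $d(G) = 4$ together with $G \cong C_n \times C_n \times H$, where $n$ is a power of $3$ and $H$ is either trivial or an abelian Beauville group, which is the third bullet.

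Combining the three cases shows that the only attainable values are $1$, $2$, and $4$, so in particular $d(G) = 3$ and every value exceeding $4$ are excluded. The point that actually makes $d(G) \in \{1,2,4\}$ hold is that Theorem \ref{thm3} closes the apparent gap between $2$ and $4$: it is not merely a structural description of abelian generalised Beauville groups but also the assertion that their Beauville dimension cannot equal $3$. Consequently there is no real obstacle at the level of this corollary, since all the substantive content — the classification of abelian Beauville groups and the computation $d(C_{3^k} \times C_{3^k}) = 4$ — already lives in Theorems \ref{thm2} and \ref{thm3}; the only care needed is to verify exhaustiveness of the case split and that the three numerical values are matched to the three structural descriptions, both of which are immediate from the statements invoked.
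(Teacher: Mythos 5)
Your proposal is correct and matches the paper's intent exactly: the paper gives no separate proof, stating only that the corollary ``easily follows'' from Theorems \ref{thm2} and \ref{thm3}, and your three-way case split on $d(G)$ (with $d(G)=1$ and $d(G)=2$ handled by Definitions \ref{def4}, \ref{def2} and \ref{def5}, and $d(G)>2$ handled by Theorem \ref{thm3}) is precisely that bookkeeping. You also correctly identify that the substantive content, namely the exclusion of $d(G)=3$ and of values above $4$, resides in Theorem \ref{thm3} rather than in the corollary itself.
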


From the computational results given in the next section, we were also able to find a number of non-abelian infinite families of generalised Beauville groups. The following is a typical example.

\begin{thm}
\label{thm8}
Let $G$ be a finite group with structure $C_3 \times (C_p : C_3)$, where $p$ is a prime and $p \equiv 1 \ ({\emph{mod }}  3)$. Then, $d(G) = 4$.
\end{thm}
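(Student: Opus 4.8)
The plan is to exploit the abelianisation of $G$. Writing $C_p = \langle a \rangle$ for the normal Sylow $p$-subgroup, $\langle b \rangle$ for the complement acting non-trivially, and $\langle c \rangle$ for the central direct factor, one checks that $[G,G] = \langle a \rangle \cong C_p$, so the quotient map $\pi : G \to G/[G,G] \cong C_3 \times C_3$ has kernel $C_p$. First I would record the elementary structural facts I need: $G$ is $2$-generated (e.g.\ by $(c,b)$ and $(1,ab)$); its element orders are $1,3,p,3p$; the only elements of order $3$ lying in $Z(G) = \langle c \rangle$ are $c, c^2$; every non-central element of order $3$ has $G$-conjugacy class equal to a full fibre of $\pi$ (its centraliser is just the cyclic group it generates); and every element of order $3p$ has the form $(c^{\pm 1}, a^j)$ with $j \neq 0$ and generates a $C_{3p}$ containing all of $C_p$. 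A key consequence is that the line $\ell_c := \pi(\langle c \rangle)$ is distinguished: it is fixed by every automorphism of $G$, it is the only one of the four order-$3$ subgroups of $C_3 \times C_3$ whose order-$3$ preimages are central, and all elements of order $3p$ map into it.

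The heart of the argument is to transfer everything to $C_3 \times C_3$, which has Beauville dimension $4$ by Example \ref{example2}. For any generating pair $(x,y)$ of $G$ the image $(\bar x, \bar y)$ generates $C_3 \times C_3$, so $\Sigma(\bar x, \bar y)$ consists of the identity together with exactly three of the four order-$3$ subgroups (``lines''), omitting precisely one. I would then establish the crucial dictionary: (i) for each line $\ell \neq \ell_c$ contained in $\Sigma(\bar x, \bar y)$, the set $\Sigma(x,y)$ contains \emph{every} order-$3$ element mapping into $\ell$, which is immediate from the full-fibre conjugacy class fact; (ii) if $\ell_c \subseteq \Sigma(\bar x, \bar y)$, then $c, c^2 \in \Sigma(x,y)$ and in fact $\Sigma(x,y) \supseteq C_p$, because the generator mapping into $\ell_c$ must have order $3p$ (a central order-$3$ element cannot belong to a generating pair, as it fails to generate) and its powers sweep out the normal subgroup $C_p$; and (iii) if $\ell_c$ is the omitted line, then every element of $\Sigma(x,y)$ has order $1$ or $3$ and maps to a non-$\ell_c$ line.

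For the lower bound $d(G) \geq 4$, suppose we are given at most three generating pairs. Their images omit at most three of the four lines, so some line $\ell$ lies in all of the $\Sigma(\bar x_i, \bar y_i)$. If $\ell = \ell_c$, then by (ii) the central element $c$ lies in every $\Sigma(x_i, y_i)$; if $\ell \neq \ell_c$, then by (i) all $2p$ order-$3$ elements over $\ell$ lie in every $\Sigma(x_i, y_i)$. In either case the intersection is non-trivial, so no two or three pairs can form a Beauville structure. For the upper bound $d(G) \leq 4$, I would lift the four pairs of Example \ref{example2}, which collectively omit all four lines, to four generating pairs of $G$, choosing the pair that omits $\ell_c$ to have generators of order $3$ (possible precisely because its three lines avoid $\ell_c$). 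Then, checking the intersection element by element via the dictionary: no order-$3$ element survives (no line is common to all four images), $c$ and $c^2$ are killed by the $\ell_c$-omitting pair, and all elements of order $p$ or $3p$ are killed by that same pair, whose $\Sigma$ contains none of them by (iii). Hence the four sets meet only in $\{e\}$, and with Lemma \ref{lem2b} this gives $d(G) = 4$. (Alternatively the upper bound can be packaged through Lemma \ref{lem2}, taking $N = C_p$ and noting that the order-$3$ pair is faithfully represented in the quotient.)

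The step I expect to be the main obstacle is the lower bound when the common line $\ell$ differs from $\ell_c$: here the various $\Sigma(x_i, y_i)$ a priori contain different order-$3$ elements over $\ell$, and a naive fibre-by-fibre lift fails because $\mathrm{Aut}(G)$ fixes $\ell_c$ and so cannot symmetrise the four lines. The resolution, and the technical crux of the whole proof, is the observation that a non-central order-$3$ element has its entire $\pi$-fibre as a single conjugacy class, so that as soon as one line-direction $\ell \neq \ell_c$ appears in $\Sigma(\bar x, \bar y)$ the set $\Sigma(x,y)$ must already contain all order-$3$ elements over $\ell$, not merely some of them.
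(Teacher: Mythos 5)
Your proof is correct, but it is organised genuinely differently from the paper's. The paper works entirely inside $G$: it counts the $p+8$ conjugacy classes, argues that a generating pair can only produce one of four explicit sets $\Sigma_1,\dots,\Sigma_4$ (three of which contain $Z(G)$ together with every element of order $p$ and $3p$, while $\Sigma_4$ consists of the $6p$ non-central elements of order $3$), and then checks directly that any two or three of these sets meet non-trivially while all four meet only in the identity. Your reduction through the abelianisation $\pi\colon G \to G/C_p \cong C_3\times C_3$, the dictionary (i)--(iii) between lines of $C_3\times C_3$ and fibres in $G$, the pigeonhole argument for the lower bound, and the lifting of the four pairs of Example \ref{example2} for the upper bound do not appear in the paper. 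What your route buys is conceptual clarity (it explains why the answer coincides with $d(C_3\times C_3)=4$) and portability to the other families listed after Theorem \ref{thm8}; what the paper's route buys is the stronger structural fact that $G$ has exactly four sets $\Sigma$ in total.

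Two remarks. First, a harmless slip in your justification: the centraliser of a non-central element of order $3$ is not the cyclic group it generates but a Sylow $3$-subgroup of order $9$ isomorphic to $C_3\times C_3$ (for instance, the centraliser of $b$ is $\langle c\rangle\times\langle b\rangle$). The fact you actually use --- that the conjugacy class of such an element is a full fibre of $\pi$ --- remains correct: the class lies inside a single fibre because the quotient is abelian, and it has size $9p/9=p$, which is exactly the fibre size. Second, your dichotomy by the omitted line is more robust than the paper's dichotomy by generator orders. Writing $bab^{-1}=a^r$, the pair $(c,b)$ and $(c,ab^2)$ generates $G$, both elements are non-central of order $3$, yet their product $(c^2,a^r)$ has order $3p$; for such a pair the paper's case a) conclusion $\Sigma(g_1,g_2)=\Sigma_4$ fails, the set being one of $\Sigma_1,\Sigma_2,\Sigma_3$ instead. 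This does not affect the paper's list of four sets or its theorem, but your line-based case analysis (the relevant question being whether $\ell_c$ lies among $\langle\bar x\rangle,\langle\bar y\rangle,\langle\bar x\bar y\rangle$, a condition on the triple rather than on the pair) handles such pairs correctly where the paper's, read literally, does not.
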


\begin{proof}
The group $G$ contains exactly $p - 1$ elements of order $p$, $2 (p -1)$ elements of order $3 p$, and $2 (3 p + 1)$ elements of order $3$. There are in total $p + 8$ conjugacy classes: $1$ for the identity element, $1$ for $z \in G$ such that $Z(G) = \langle z \rangle$, $1$ for $z^2$, $6$ each containing $p$ non-central elements of order $3$,  $(p -1) / 3$ with $3$ elements of order $p$ each, and $2(p -1) / 3$ each with $3$ elements of order $3 p$. We now show that $G$ can only have precisely 4 sets $\Sigma$:
\begin{enumerate}
\item $\Sigma_1$ contains all the elements of orders $p$ and $3p$, and $2(2p + 1)$ elements of order 3;
\item $\Sigma_2$ contains all the elements of orders $p$ and $3p$, and $2(2p + 1)$ elements of order 3;
\item $\Sigma_3$ contains all the elements of orders $p$ and $3p$, and $2(2p + 1)$ elements of order 3;
\item $\Sigma_4$ contains $6p$ elements of order 3.
\end{enumerate}
Looking at all pairs of generators $g_1, g_2$ for $G$, there are only two possible combinations:
\begin{enumerate}[label=\alph*)]
\item $g_1, g_2$ have both order $3$ and do not belong to $Z(G)$, in which case $\Sigma(g_1, g_2)$ contains 6 conjugacy classes of elements of order 3 ($6p$ elements), but no element from $Z(G)$, and hence $\Sigma(g_1, g_2) = \Sigma_4$;
\item without loss of generality, $g_1$ has order 3 and does not belong to $Z(G)$, while $g_2$ has order $3p$, so that $\Sigma(g_1, g_2)$ contains the $7p -1$ elements given by all the conjugacy classes with elements of order $p$ and $3p$, $Z(G)$ (elements of order $3p$ power up to the centre), and 4 conjugacy classes of generating elements of order 3, and thus there can only be 3 such sets, {\it{i.e.}}, $\Sigma(g_1, g_2) \in \{ \Sigma_1, \Sigma_2, \Sigma_3 \}$.
\end{enumerate}
Having only 4 sets $\Sigma$, $d(G) \leq 4$. Since each set $\Sigma$ contains at least 4 of the 6 available conjugacy classes of generating elements of order 3 and $Z(G) \in \bigcap_{1 \leq i \leq 3} \Sigma_i $, $d(G) \notin \{ 2, 3 \}$. However, $\bigcap_{1 \leq i \leq 4}  \Sigma_i = \emptyset$, so $d(G) \neq 1$.
Therefore, $d(G) = 4$.
\end{proof}

Notice that in the above result we need the condition $p \equiv 1 \ ({\text{mod }}  3)$ as we can find a non-trivial homomorphisms to construct a semidirect product of the form  $C_n : C_m$ if and only if $\gcd(\phi(n), m) > 1$. We also want $p$ to be prime, because otherwise, as in the case of $C_3 \times (C_{70} : C_3)$, we might get a group with $d = 1$. Finally, observe that substituting $3$ with any other odd prime, we obtain a Beauville group.\\
With very similar arguments and techniques, it is not too difficult to show that , for $k, n \in \nn$ and primes $p_1, p_2 \equiv 1 \ ({\text{mod }}  3)$, groups with structures $A_4 \times (C_{p_1} : C_3)$, $(C_{p_1} \times C_3 \times C_3) : C_3)$, $(C_{3 k} \times C_{3 k}) : C_3$, $C_3 \times ((C_n \times C_n) : C_3)$, $C_9 \times ((C_n \times C_n) : C_9)$, $(C_{p_1} : C_3) \times (C_{p_2} : C_3)$, $C_3 \times ((C_{p_1} \times C_2 \times C_2) : C_3)$ have all $d = 4$.

\newpage
\section{Generalised Beauville Groups (orders 1-1023)}
\label{sec3}
Relying on the results from the previous section, those proved in \cite{Bar, Wei}, and our GAP algorithms, we have scanned groups of size between 1 and 1023 (11759892 groups) searching for generalised Beauville groups. Notice that the algorithm used in \cite{Bar} was specifically designed for Beauville $p$-groups: it checked only for intersections between any {\it{two}} sets $\Sigma$ by exploiting the way in which generators of $p$-groups are stored in GAP; hence, it would have not been able to spot higher values of $d$ or Beauville non-$p$-groups. The full list we have obtained is presented in the following tables (arranged by the values of $d$). Given any generalised Beauville group $G$, the first column displays its coordinates in the Small Groups library (see \cite{sma}), while the second provides its structure as saved in GAP.

\begin{minipage}{.5\linewidth}
\scalebox{0.71}{
\begin{tabular}{|l|l|}
\hline
\multicolumn{2}{|c|}{Generalised Beauville Groups with $d = 2$}\\
\hline
SmallGroup & Structure Description\\
\hline
$(25, 2)$ & $C_{5} \times C_{5}$\\
\hline
$(49, 2)$ & $C_7 \times C_7$\\
\hline
$(120, 34)$ & $S_5$\\
\hline
$(121, 2)$ & $C_{11} \times C_{11}$\\
\hline
$(125, 3)$ & $(C_5 \times C_5) : C_5$\\
\hline
$(128, 36)$ & $(C_2 \times ((C_4 \times C_2) : C_2)) : C_4$\\
\hline
$(168, 42)$ & $PSL_{3}(2)$\\
\hline
$(169, 2)$ & $C_{13} \times C_{13}$\\
\hline
$(240, 90)$ & $SL_{2}(5) : C_2$\\
\hline
$(240, 91)$ & $A_5 : C_4$\\
\hline
$(240, 189)$ & $C_2 \times S_5$\\
\hline
$(243, 3)$ & $(C_3 \times ((C_3 \times C_3) : C_3)) : C_3$\\
\hline
$(256, 295)$ & $(C_2 \times (((C_4 \times C_2) : C_2) : C_2)) : C_4$\\
\hline
$(256, 298)$ & $(C_2 \times (((C_4 \times C_2) : C_2) : C_2)) : C_4$\\
\hline
$(256, 306)$ & $(((C_4 \times C_2) : C_4) : C_2) : C_4$\\
\hline
$(275, 3)$ & $C_5 \times (C_{11} : C_5)$\\
\hline
$(289, 2)$ & $C_{17} \times C_{17}$\\
\hline
$(300, 22)$ & $C_5 \times A_5$\\
\hline
$(320, 1635)$ & $((C_2 \times C_2 \times C_2 \times C_2) : C_5) : C_4$\\
\hline
$(324, 160)$ & $((C_3 \times ((C_3 \times C_3) : C_2)) : C_2) : C_3$\\
\hline
$(336, 114)$ & $SL_{2}(7)$\\
\hline
$(336, 208)$ & $PSL_{3}(2) : C_2$\\
\hline
$(336, 209)$ & $C_2 \times PSL_{3}(2)$\\
\hline
$(343, 3)$ & $(C_7 \times C_7) : C_7$\\
\hline
$(360, 118)$ & $A_6$\\
\hline
$(360, 119)$ & $C_3 \times S_5$\\
\hline
$(360, 120)$ & $GL_{2}(4) : C_2$\\
\hline
$(361, 2)$ & $C_{19} \times C_{19}$\\
\hline
$(392, 39)$ & $C_7 \times ((C_2 \times C_2 \times C_2) : C_7)$\\
\hline
$(400, 213)$ & $C_5 \times ((C_2 \times C_2 \times C_2 \times C_2) : C_5)$\\
\hline
$(480, 218)$ & $GL_{2}(5)$\\
\hline
$(480, 219)$ & $SL_{2}(5) : C_4$\\
\hline
$(480, 948)$ & $(SL_{2}(5) : C_2) : C_2$\\
\hline
$(480, 950)$ & $C_2 \times (SL_{2}(5) : C_2)$\\
\hline
$(480, 951)$ & $(C_2 \times C_2 \times A_5) : C_2$\\
\hline
$(480, 952)$ & $C_2 \times (A_5 : C_4)$\\
\hline
$(504, 156)$ & $PSL_{2}(8)$\\
\hline
$(504, 157)$ & $C_3 \times PSL_{3}(2)$\\
\hline
$(512, 325)$ & $(((C_4 \times C_2 \times C_2) : C_4) : C_2) : C_4$\\
\hline
$(512, 335)$ & $((C_2 \times (((C_4 \times C_2) : C_2) : C_2)) : C_2) : C_4$\\
\hline
$(512, 351)$ & $(C_2 \times ((C_4 \times C_2 \times C_2) : C_4)) : C_4$\\
\hline
$(512, 1572)$ & $((C_4 \times C_4) : C_8) : C_4$\\
\hline
$(512, 1574)$ & $(C_4 \times (C_8 : C_4)) : C_4$\\
\hline
$(512, 1632)$ & $(C_2 \times ((((C_4 \times C_2) : C_2) : C_2) : C_2)) : C_4$\\
\hline
$(512, 1634)$ & $(C_2 \times ((C_4 \times C_4) : C_4)) : C_4$\\
\hline
$(512, 1637)$ & $(C_2 \times ((C_4 \times C_4) : C_4)) : C_4$\\
\hline
$(512, 1641)$ & $((C_2 . (((C_4 \times C_2) : C_2) : C_2)$\\
\hline
$(512, 1642)$ & $(((C_4 \times C_4) : C_4) : C_2) : C_4$\\
\hline
$(512, 1643)$ & $((C_2 \times ((C_8 : C_2) : C_2)) : C_2) : C_4$\\
\hline
$(512, 1644)$ & $((C_2 \times (((C_4 \times C_2) : C_2) : C_2)) : C_2) : C_4$\\
\hline
$(512, 1649)$ & $(((C_8 \times C_2) : C_4) : C_2) : C_4$\\
\hline
$(529, 2)$ & $C_{23} \times C_{23}$\\
\hline
$(576, 8652)$ & $(A_4 \times A_4) : C_4$\\
\hline
$(600, 54)$ & $C_5 \times SL_{2}(5)$\\
\hline
$(600, 145)$ & $(C_5 \times A_5) : C_2$\\
\hline
$(625, 2)$ & $C_{25} \times C_{25}$\\
\hline
$(625, 4)$ & $C_{25} : C_{25}$\\
\hline
$(625, 7)$ & $(C_5 \times C_5 \times C_5) : C_5$\\
\hline
\end{tabular}
}
\end{minipage}
\begin{minipage}{.5\linewidth}
\scalebox{0.71}{
\begin{tabular}{|l|l|}
\hline
\multicolumn{2}{|c|}{Generalised Beauville Groups with $d = 2$}\\
\hline
SmallGroup & Structure Description\\
\hline
$(640, 787)$ & $(C_{10} \times ((C_4 \times C_2) : C_2)) : C_4$\\
\hline
$(640, 21454)$ & $C_2 . (((C_2 \times C_2 \times C_2 \times C_2) : C_5) : C_4)$\\
\hline
$(640, 21455)$ & $(((C_2 \times Q_8) : C_2) : C_5) : C_4$\\
\hline
$(640, 21456)$ & $((C_2 \times C_2 \times C_2 \times C_2) : C_5) : C_8$\\
\hline
$(640, 21536)$ & $C_2 \times (((C_2 \times C_2 \times C_2 \times C_2) : C_5) : C_4)$\\
\hline
$(660, 13)$ & $PSL_{2}(11)$\\
\hline
$(672, 1043)$ & $PSL_{3}(2) : C_4$\\
\hline
$(672, 1044)$ & $SL_{2}(7) : C_2$\\
\hline
$(672, 1046)$ & $C_4 \times PSL_{3}(2)$\\
\hline
$(672, 1048)$ & $C_2 \times SL_{2}(7)$\\
\hline
$(672, 1254)$ & $C_2 \times (PSL_{3}(2) : C_2)$\\
\hline
$(672, 1255)$ & $C_2 \times C_2 \times PSL_{3}(2)$\\
\hline
$(720, 409)$ & $SL_{2}(9)$\\
\hline
$(720, 411)$ & $C_3 \times (SL_{2}(5) : C_2)$\\
\hline
$(720, 412)$ & $C_3 \times (A_5 : C_4)$\\
\hline
$(720, 413)$ & $GL_{2}(4) : C_4$\\
\hline
$(720, 415)$ & $(C_3 \times SL_{2}(5)) : C_2$\\
\hline
$(720, 763)$ & $S_6$\\
\hline
$(720, 764)$ & $A_6 : C_2$\\
\hline
$(720, 766)$ & $C_2 \times A_6$\\
\hline
$(720, 767)$ & $S_5 \times S_3$\\
\hline
$(720, 769)$ & $C_6 \times S_5$\\
\hline
$(720, 770)$ & $C_2 \times (GL_{2}(4) : C_2)$\\
\hline
$(729, 34)$ & $((C_3 \times C_3 \times C_3 \times C_3) : C_3) : C_3$\\
\hline
$(729, 37)$ & $(C_3 \times ((C_9 \times C_3) : C_3)) : C_3$\\
\hline
$(729, 40)$ & $(C_3 \times ((C_9 \times C_3) : C_3)) : C_3$\\
\hline
$(775, 3)$ & $C_5 \times (C_{31} : C_5)$\\
\hline
$(800, 1065)$ & $C_5 \times (((C_2 \times Q_8) : C_2) : C_5)$\\
\hline
$(841, 2 )$ & $C_{29} \times C_{29}$\\
\hline
$(864, 2666)$ & $((C_2 \times ((C_3 \times C_3) : C_4)) : C_4) : C_3$\\
\hline
$(864, 4445)$ & $C_6 \times S_3 \times SL_{2}(3)$\\
\hline
$(864, 4665)$ & $C_2 \times C_2 \times (((C_3 \times C_3) : Q_8) : C_3)$\\
\hline
$(864, 4680)$ & $A_4 \times A_4 \times S_3$\\
\hline
$(900, 88)$ & $C_{15} \times A_5$\\
\hline
$(960, 639)$ & $SL_{2}(5) : C_8$\\
\hline
$(960, 5693)$ & $C_2 \times GL_{2}(5)$\\
\hline
$(960, 5694)$ & $GL_{2}(5) : C_2$\\
\hline
$(960, 5699)$ & $(C_4 \times SL_{2}(5)) : C_2$\\
\hline
$(960, 5704)$ & $(C_4 \times SL_{2}(5)) : C_2$\\
\hline
$(960, 5709)$ & $(C_2 \times (SL_{2}(5) : C_2)) : C_2$\\
\hline
$(960, 5710)$ & $(SL_{2}(5) : C_4) : C_2$\\
\hline
$(960, 5711)$ & $(SL_{2}(5) : C_4) : C_2$\\
\hline
$(960, 5712)$ & $SL_{2}(5) : Q_8$\\
\hline
$(960, 5716)$ & $((SL_{2}(5) : C_2) : C_2) : C_2$\\
\hline
$(960, 5719)$ & $((SL_{2}(5) : C_2) : C_2) : C_2$\\
\hline
$(960, 5721)$ & $(C_2 \times (A_5 : C_4)) : C_2$\\
\hline
$(960, 5723)$ & $(C_2 \times C_2 \times SL_{2}(5)) : C_2$\\
\hline
$(960, 5724)$ & $C_2 \times (SL_{2}(5) : C_4)$\\
\hline
$(960, 5725)$ & $(SL_{2}(5) : C_4) : C_2$\\
\hline
$(961, 2)$ & $C_{31} \times C_{31}$\\
\hline
$(972, 138)$ & $(C_2 \times C_2 \times ((C_9 \times C_3) : C_3)) : C_3$\\
\hline
$(972, 757)$ & $((C_3 \times ((C_3 \times C_3) : C_2)) : C_2) : C_9$\\
\hline
$(972, 877)$ & $C_3 \times (((C_3 \times ((C_3 \times C_3) : C_2)) : C_2) : C_3)$\\
\hline
$(1008, 517)$ & $C_3 \times SL_{2}(7)$\\
\hline
$(1008, 881)$ & $(C_3 \times PSL_{3}(2)) : C_2$\\
\hline
$(1008, 882)$ & $C_3 \times (PSL_{3}(2) : C_2)$\\
\hline
$(1008, 883)$ & $S_3 \times PSL_{3}(2)$\\
\hline
$(1008, 884)$ & $C_6 \times PSL_{3}(2)$\\
\hline
\end{tabular}
}
\end{minipage}

\begin{minipage}[t]{.5\linewidth}
\scalebox{0.71}{
\begin{tabular}[t]{|l|l|}
\hline
\multicolumn{2}{|c|}{Generalised Beauville Groups with $d = 3$}\\
\hline
SmallGroup & Structure Description\\
\hline
$(243, 4)$ & $(C_3 \times (C_9 : C_3)) : C_3$\\
\hline
$(243, 13)$ & $((C_9 \times C_3) : C_3) : C_3$\\
\hline
$(432, 623)$ & $C_3 \times S_3 \times SL_{2}(3)$\\
\hline
$(432, 735)$ & $C_2 \times (((C_3 \times C_3) : Q_8) : C_3)$\\
\hline
$(432, 749)$ & $A_4 \times S_3 \times S_3$\\
\hline
$(432, 763)$ & $C_6 \times A_4 \times S_3$\\
\hline
$(729, 9)$ & $(C_3 \times ((C_9 \times C_3) : C_3)) : C_3$\\
\hline
$(729, 35)$ & $((C_3 \times (C_9 : C_3)) : C_3) : C_3$\\
\hline
$(729, 38)$ & $(C_3 \times (C_3 . ((C_3 \times C_3) : C_3)$\\
\hline
$(729, 41)$ & $(C_3 \times (C_3 . ((C_3 \times C_3) : C_3)$\\
\hline
$(729, 44)$ & $((C_9 : C_9) : C_3) : C_3$\\
\hline
$(729, 45)$ & $((C_9 : C_9) : C_3) : C_3$\\
\hline
$(729, 48)$ & $((C_9 \times C_3) : C_9) : C_3$\\
\hline
$(729, 65)$ & $(((C_9 \times C_3) : C_3) : C_3) : C_3$\\
\hline
$(729, 68)$ & $(((C_9 \times C_3) : C_3) : C_3) : C_3$\\
\hline
$(729, 87)$ & $(C_3 \times (C_9 : C_9)) : C_3$\\
\hline
$(864, 4186)$ & $(C_3 \times S_3 \times SL_{2}(3)) : C_2$\\
\hline
$(864, 4187)$ & $(C_3 \times S_3 \times SL_{2}(3)) : C_2$\\
\hline
$(864, 4446)$ & $C_3 \times ((C_6 \times SL_{2}(3)) : C_2)$\\
\hline
$(864, 4449)$ & $C_3 \times ((S_3 \times SL{2}(3)) : C_2)$\\
\hline
$(864, 4708)$ & $C_3 \times C_3 \times (((C_2 \times C_2 \times C_2 \times C_2) : C_3) : C_2)$\\
\hline
$(960, 5696)$ & $C_4 \times (SL_{2}(5) : C_2)$\\
\hline
$(960, 5707)$ & $(C_2 \times C_2) . (C_2 \times S_5)$\\
\hline
$(972, 135)$ & $(C_2 \times C_2 \times ((C_9 \times C_3) : C_3)) : C_3$\\
\hline
$(972, 141)$ & $(C_2 \times C_2 \times ((C_9 \times C_3) : C_3)) : C_3$\\
\hline
$(972, 179)$ & $(C_2 \times C_6 \times ((C_3 \times C_3) : C_3)) : C_3$\\
\hline
$(972, 183)$ & $(C_2 \times C_6 \times (C_9 : C_3)) : C_3$\\
\hline
\end{tabular}
}
\end{minipage}
\begin{minipage}[t]{.5\linewidth}
\scalebox{0.71}{
\begin{tabular}[t]{|l|l|}
\hline
\multicolumn{2}{|c|}{Generalised Beauville Groups with $d = 4$}\\
\hline
SmallGroup & Structure Description\\
\hline
$(9, 2)$ & $C_{3} \times C_{3}$\\
\hline
$(27, 3)$ & $(C_3 \times C_3) : C_3$\\
\hline
$(36, 11)$ & $C_3 \times A_4$\\
\hline
$(63, 3)$ & $C_3 \times (C_7 : C_3)$\\
\hline
$(81, 2)$ & $C_9 \times C_9$\\
\hline
$(81, 4)$ & $C_9 : C_9$\\
\hline
$(81, 9)$ & $(C_9 \times C_3) : C_3$\\
\hline
$(108, 22)$ & $(C_6 \times C_6) : C_3$\\
\hline
$(117, 3)$ & $C_3 \times (C_{13} : C_3)$\\
\hline
$(144, 68)$ & $C_3 \times ((C_4 \times C_4) : C_3)$\\
\hline
$(144, 156)$ & $C_6 \times SL_{2}(3)$\\
\hline
$(144, 184)$ & $A_4 \times A_4$\\
\hline
$(144, 193)$ & $C_2 \times C_6 \times A_4$\\
\hline
$(171, 4)$ & $C_3 \times (C_{19} : C_3)$\\
\hline
$(189, 8)$ & $(C_{21} \times C_3) : C_3$\\
\hline
$(225, 5)$ & $C_3 \times ((C_5 \times C_5) : C_3)$\\
\hline
$(225, 6)$ & $C_{15} \times C_{15}$\\
\hline
$(243, 2)$ & $(C_9 \times C_3) : C_9$\\
\hline
$(243, 9)$ & $(C_3 \times C_3) . ((C_3 \times C_3) : C_3)$\\
\hline
$(243, 14)$ & $(C_9 \times C_3) : C_9$\\
\hline
$(243, 26)$ & $(C_9 \times C_9) : C_3$\\
\hline
$(243, 28)$ & $(C_9 : C_9) : C_3$\\
\hline
$(252, 27)$ & $A_4 \times (C_7 : C_3)$\\
\hline
$(252, 40)$ & $C_3 \times ((C_{14} \times C_2) : C_3)$\\
\hline
$(279, 3)$ & $C_3 \times (C_{31} : C_3)$\\
\hline
$(288, 230)$ & $C_3 \times (((C_4 \times C_2) : C_4) : C_3)$\\
\hline
$(288, 859)$ & $A_4 \times SL_{2}(3)$\\
\hline
$(288, 981)$ & $C_2 \times C_6 \times SL_{2}(3)$\\
\hline
$(288, 985)$ & $C_3 \times ((C_2 \times SL_{2}(3)) : C_2)$\\
\hline
$(288, 1029)$ & $C_2 \times A_4 \times A_4$\\
\hline
$(324, 46 )$ & $C_9 \times ((C_2 \times C_2) : C_9)$\\
\hline
$(324, 47)$ & $((C_2 \times C_2) : C_9) : C_9$\\
\hline
$(324, 48)$ & $(C_{18} \times C_2) : C_9$\\
\hline
$(324, 50)$ & $(C_{18} \times C_6) : C_3$\\
\hline
$(324, 54)$ & $(C_2 \times C_2 \times ((C_3 \times C_3) : C_3)) : C_3$\\
\hline
$(333, 4)$ & $C_3 \times (C_{37} : C_3)$\\
\hline
$(351, 8)$ & $(C_{39} \times C_3) : C_3$\\
\hline
$(387, 3)$ & $C_3 \times (C_{43} : C_3)$\\
\hline
$(432, 103)$ & $(C_{12} \times C_{12}) : C_3$\\
\hline
$(432, 336)$ & $C_2 \times ((C_3 \times C_3 \times Q_8) : C_3)$\\
\hline
$(432, 526)$ & $(C_2 \times C_6 \times A_4) : C_3$\\
\hline
$(432, 550)$ & $C_2 \times C_2 \times ((C_6 \times C_6) : C_3)$\\
\hline
$(441, 3)$ & $C_3 \times (C_{49} : C_3)$\\
\hline
$(441, 9)$ & $(C_7 : C_3) \times (C_7 : C_3)$\\
\hline
$(441, 12)$ & $C_3 \times ((C_7 \times C_7) : C_3)$\\
\hline
$(441, 13)$ & $C_{21} \times C_{21}$\\
\hline
$(468, 32)$ & $A_4 \times (C_{13} : C_3)$\\
\hline
$(468, 49)$ & $C_3 \times ((C_{26} \times C_2) : C_3)$\\
\hline
$(504, 158)$ & $C_3 \times (((C_2 \times C_2 \times C_2) : C_7) : C_3)$\\
\hline
$(513, 9)$ & $(C_{57} \times C_3) : C_3$\\
\hline
$(549, 3)$ & $C_3 \times (C_{61} : C_3)$\\
\hline
$(567, 8)$ & $C_9 \times (C_7 : C_9)$\\
\hline
$(567, 9)$ & $(C_7 : C_9) : C_9$\\
\hline
$(567, 10)$ & $C_{63} : C_9$\\
\hline
$(567, 11)$ & $C_{63} : C_9$\\
\hline
$(567, 13)$ & $(C_{63} \times C_3) : C_3$\\
\hline
$(567, 17)$ & $(C_3 \times (C_7 : C_9)) : C_3$\\
\hline
\end{tabular}
}
\end{minipage}

\begin{minipage}[t]{.5\textwidth}
\scalebox{0.71}{
\begin{tabular}[t]{|l|l|}
\hline
\multicolumn{2}{|c|}{Generalised Beauville Groups with $d = 4$}\\
\hline
$(576, 1070)$ & $C_3 \times ((C_8 \times C_8) : C_3)$\\
\hline
$(576, 3609)$ & $C_6 \times (((C_4 \times C_2) : C_4) : C_3)$\\
\hline
$(576, 3615)$ & $C_3 \times (((C_2 \times C_2 \times Q_8) : C_3) : C_2)$\\
\hline
$(576, 3617)$ & $C_3 \times ((((C_4 \times C_2) : C_4) : C_3) : C_2)$\\
\hline
$(576, 3618)$ & $C_3 \times ((((C_4 \times C_2) : C_4) : C_3) : C_2)$\\
\hline
$(576, 3621)$ & $C_3 \times ((((C_2 \times D_8) : C_2) : C_3) : C_2)$\\
\hline
$(576, 5127)$ & $A_4 \times ((C_4 \times C_4) : C_3)$\\
\hline
$(576, 5128)$ & $SL_{2}(3) \times SL_{2}(3)$\\
\hline
$(576, 5129)$ & $((((C_2 \times C_2 \times C_2 \times C_2) : C_3) : C_2) : C_2) : C_3$\\
\hline
$(576, 7412)$ & $C_2 \times C_6 \times ((C_4 \times C_4) : C_3)$\\
\hline
$(576, 7417)$ & $C_{12} \times (SL_{2}(3) : C_2)$\\
\hline
$(576, 7418)$ & $C_3 \times ((C_4 \times SL_{2}(3)) : C_2)$\\
\hline
$(576, 7420)$ & $C_6 \times (((C_2 \times C_2 \times C_2 \times C_2) : C_3) : C_2)$\\
\hline
$(576, 7421)$ & $C_6 \times (((C_4 \times C_4) : C_3) : C_2)$\\
\hline
$(576, 7422)$ & $C_6 \times (((C_4 \times C_4) : C_3) : C_2)$\\
\hline
$(576, 7423)$ & $C_3 \times ((C_2 \times C_2 \times SL_{2}(3)) : C_2)$\\
\hline
$(576, 7424)$ & $C_3 \times D_8 \times SL_{2}(3)$\\
\hline
$(576, 7428)$ & $C_3 \times ((((C_4 \times C_4) : C_3) : C_2) : C_2)$\\
\hline
$(576, 7429)$ & $C_3 \times ((((C_2 \times C_2 \times C_2 \times C_2) : C_3) : C_2) : C_2)$\\
\hline
$(576, 8357)$ & $C_2 \times A_4 \times SL_{2}(3)$\\
\hline
$(576, 8360)$ & $((((C_2 \times D_8) : C_2) : C_3) : C_3) : C_2$\\
\hline
$(576, 8662)$ & $C_2 \times C_2 \times A_4 \times A_4$\\
\hline
$(576, 8664)$ & $(C_2 \times C_2 \times ((C_2 \times C_2 \times C_2 \times C_2) : C_3)) : C3$\\
\hline
$(603, 3)$ & $C_3 \times (C_{67} : C_3)$\\
\hline
$(657, 4)$ & $C_3 \times (C_{73} : C_3)$\\
\hline
$(675, 12)$ & $(C_{15} \times C_{15}) : C_3$\\
\hline
$(675, 14)$ & $C_5 \times C_5 \times ((C_3 \times C_3) : C_3)$\\
\hline
$(684, 32)$ & $A_4 \times (C_{19} : C_3)$\\
\hline
$(684, 45)$ & $C_3 \times ((C_{38} \times C_2) : C_3)$\\
\hline
$(711, 3)$ & $C_3 \times (C_{79} : C_3)$\\
\hline
$(729, 2)$ & $C_{27} \times C_{27}$\\
\hline
$(729, 3)$ & $C_{27} : C_{27}$\\
\hline
$(729, 10)$ & $((C_9 \times C_3) : C_3) : C_9$\\
\hline
$(729, 12)$ & $(C_3 \times (C_9 : C_3)) : C_9$\\
\hline
$(729, 22)$ & $C_{27} : C_{27}$\\
\hline
$(729, 24)$ & $(C_9 \times C_9) : C_9$\\
\hline
$(729, 26)$ & $(C_9 \times C_3 \times C_3) : C_9$\\
\hline
$(729, 30)$ & $(C_9 \times C_9) : C_9$\\
\hline
$(729, 46)$ & $C_3 . ((C_3 \times (C_9 : C_3)) : C_3)$\\
\hline
$(729, 47)$ & $C_3 . ((C_3 \times (C_9 : C_3)) : C_3)$\\
\hline
$(729, 50)$ & $(C_3 \times C_3) . ((C_9 \times C_3) : C_3)$\\
\hline
$(729, 52)$ & $(C_3 \times C_3) . ((C_9 \times C_3) : C_3)$\\
\hline
$(729, 56)$ & $C_3 . ((C_3 \times C_3) . ((C_3 \times C_3) : C_3)$\\
\hline
$(729, 57)$ & $C_3 . ((C_3 \times C_3) . ((C_3 \times C_3) : C_3)$\\
\hline
$(729, 66)$ & $(C_9 \times C_3 \times C_3) : C_9$\\
\hline
$(729, 69)$ & $(C_3 \times (C_9 : C_3)) : C_9$\\
\hline
$(729, 72)$ & $(C_9 \times C_9) : C_9$\\
\hline
$(729, 73)$ & $(C_9 \times C_9) : C_9$\\
\hline
$(729, 75)$ & $(C_9 : C_9) : C_9$\\
\hline
$(729, 78)$ & $(C_9 \times C_9) : C_9$\\
\hline
$(729, 85)$ & $(C_3 . ((C_3 \times C_3) : C_3)$\\
\hline
$(729, 89)$ & $(C_3 . ((C_3 \times C_3) : C_3)$\\
\hline
$(729, 95)$ & $(C_{27} \times C_9) : C_3$\\
\hline
$(756, 64)$ & $(C_2 \times C_6 \times (C_7 : C_3)) : C_3$\\
\hline
$(756, 117)$ & $(C_{42} \times C_6) : C_3$\\
\hline
$(819, 6)$ & $(C_7 : C_3) \times (C_{13} : C_3)$\\
\hline
$(819, 9)$ & $C_3 \times (C_{91} : C_3)$\\
\hline
\end{tabular}
}
\end{minipage}
\begin{minipage}[t]{.5\textwidth}
\scalebox{0.71}{
\begin{tabular}[t]{|l|l|}
\hline
\multicolumn{2}{|c|}{Generalised Beauville Groups with $d = 4$}\\
\hline
$(819, 10)$ & $C_3 \times (C_{91} : C_3)$\\
\hline
$(837, 8 )$ & $(C_{93} \times C_3) : C_3$\\
\hline
$(864, 307)$ & $(C_3 \times C_3 \times ((C_4 \times C_2) : C_4)) : C_3$\\
\hline
$(864, 2245)$ & $(C_2 \times C_6 \times SL_{2}(3)) : C_3$\\
\hline
$(864, 2537)$ & $C_2 \times C_2 \times ((C_3 \times C_3 \times Q_8) : C_3)$\\
\hline
$(864, 2547)$ & $(C_2 \times ((C_3 \times C_3 \times Q_8) : C_3)) : C_2$\\
\hline
$(864, 4003)$ & $C_2 \times ((C_2 \times C_6 \times A_4) : C_3)$\\
\hline
$(864, 4189)$ & $S_3 \times S_3 \times SL_{2}(3)$\\
\hline
$(873, 3)$ & $C_3 \times (C_{97} : C_3)$\\
\hline
$(900, 98)$ & $A_4 \times ((C_5 \times C_5) : C_3)$\\
\hline
$(900, 140)$ & $C_5 \times C_{15} \times A_4$\\
\hline
$(900, 141)$ & $C_3 \times ((C_{10} \times C_{10}) : C_3)$\\
\hline
$(927, 3)$ & $C_3 \times (C_{103} : C_3)$\\
\hline
$(972, 122)$ & $(C_{18} \times C_{18}) : C_3$\\
\hline
$(972, 128)$ & $(C_2 \times C_2) : ((C_3 \times C_3) . ((C_3 \times C_3) : C_3)$\\
\hline
$(972, 131)$ & $(C_3 \times ((C_2 \times C_2) : C_9)) : C_9$\\
\hline
$(972, 143)$ & $(C_2 \times C_2 \times (C_9 : C_9)) : C_3$\\
\hline
$(972, 159)$ & $(C_2 \times C_2 \times ((C_9 \times C_3) : C_3)) : C_3$\\
\hline
$(972, 160)$ & $(C_2 \times C_2 \times ((C_9 \times C_3) : C_3)) : C_3$\\
\hline
$(972, 164)$ & $(C_2 \times C_2 \times (C_3 . ((C_3 \times C_3) : C_3)$\\
\hline
$(972, 171)$ & $(C_{18} \times C_6) : C_9$\\
\hline
$(972, 173)$ & $(C_{18} \times C_6) : C_9$\\
\hline
$(972, 176)$ & $(C_3 \times (((C_2 \times C_2) : C_9) : C_3)) : C_3$\\
\hline
$(972, 186)$ & $((C_3 \times ((C_2 \times C_2) : C_9)) : C_3) : C_3$\\
\hline
$(981, 4)$ & $C_3 \times (C_{109} : C_3)$\\
\hline
$(999, 9)$ & $(C_{111} \times C_3) : C_3$\\
\hline
$(1008, 242)$ & $(C_7 : C_3) \times ((C_4 \times C_4) : C_3)$\\
\hline
$(1008, 409)$ & $C_3 \times ((C_{28} \times C_4) : C_3)$\\
\hline
$(1008, 525)$ & $((C_7 : C_3) : C_2) \times SL_{2}(3)$\\
\hline
$(1008, 555)$ & $C_2 \times ((C_7 : C_3) \times SL_{2}(3))$\\
\hline
$(1008, 671)$ & $C_3 \times (((C_7 \times Q_8) : C_3) : C_2)$\\
\hline
$(1008, 824)$ & $C_6 \times ((C_7 \times Q_8) : C_3)$\\
\hline
$(1008, 887)$ & $C_6 \times (((C_2 \times C_2 \times C_2) : C_7) : C_3)$\\
\hline
$(1008, 890)$ & $C_2 \times (A_4 \times ((C_7 : C_3) : C_2))$\\
\hline
$(1008, 909)$ & $C_2 \times C_2 \times (A_4 \times (C_7 : C_3))$\\
\hline
$(1008, 912)$ & $A_4 \times ((C_{14} \times C_2) : C_3)$\\
\hline
$(1008, 913)$ & $(C_2 \times C_2 \times ((C_{14} \times C_2) : C_3)) : C_3$\\
\hline
$(1008, 932)$ & $C_6 \times (((C_{14} \times C_2) : C_3) : C_2)$\\
\hline
$(1008, 946)$ & $C_2 \times C_6 \times ((C_{14} \times C_2) : C_3)$\\
\hline
\end{tabular}
}
\end{minipage}

\newpage
\section{Conclusion}
\label{sec4}

The groups displayed in the above tables have brought a significant number of patterns to our attention and opened a lot of interesting questions. In particular, there are several problems regarding non-abelian generalised Beauville groups which still need to be solved. The following list represents an attempt to capture the main ones.

\begin{problem}
\label{prob1}
Can we have a finite group $G$ with $d(G) > 4$?
\end{problem}

\begin{problem}
\label{prob2}
If the answer to Problem \ref{prob1} is yes, is there a maximum value for $d$?
\end{problem}

Considering most of the examples given in this note, the obvious approach to groups with $d > 2$ would be to try and prove that they must contain a copy of $C_3 \times C_3$, whose structures can either be lifted, or play a nice role as in Theorem \ref{thm8}. However, as we can see in one of the tables in Section \ref{sec3}, groups such as SmallGroup(960, 5696) have Beauville dimension $d = 3$ even though their order is not divisible by 9. Yet, in our computations we could not find any generalised Beauville group with $d > 2$ whose order is not divisible by 3. Hence, we are still left with the next two problems.

\begin{problem}
\label{prob4}
Can we have $d > 2$ for finite groups whose order is not divisible by $3$?
\end{problem}

\begin{problem}
\label{prob5}
If the answer to Problem \ref{prob4} is no, can we explain why that would be the case?
\end{problem}

Wider research perspectives which are currently under exploration include:
\begin{enumerate}
\item asymptotic results on the distribution of generalised Beauville groups similar to those obtained in \cite[Corollary 1.5]{Bar};
\item automorphisms of generalised Beauville varieties (see \cite{Jon1});
\item additional properties of generalised Beauville structures such as strongly real (for $i = 1, 2$, we say that a Beauville group $G$ and its structure ${\cal{X}} = \{ x_i, y_i \}$ are strongly real if  there exists an automorphism $\phi \in {\text{Aut}}(G)$ and elements $g_i \in G$ such that  $g_i \phi(x_i) g_i ^{-1} = x_i ^{-1}$ and $g_i \phi(y_i) g_i ^{-1} = y_i ^{-1}$), or mixed (where the curves are transposed by elements of the group and more specific restrictions apply).
\end{enumerate}

\newpage


\begin{thebibliography}{99}

\bibitem{Bar} Barker, N. $-$ Boston, N. $-$ Fairbairn, B. (2012). ``A note on Beauville $p$-groups," {\it{Exp. Math.}} 21(3), 298--306.

\bibitem{B} Bauer I. ``Product-Quotient Surfaces: Results and Problems" preprint 2012\\
http://arxiv.org/abs/1204.3409.

\bibitem{Bau1} Bauer, I. $-$ Catanese, F. $-$ Grunewald, F. (2005). ``Beauville Surfaces without real structures," in {\it{Geometric Methods in Algebra and Number Theory}}, Progress in Mathematics, vol. 235, 1--42. Boston: Birkh\"auser.
    
    \bibitem{BCP1} Bauer, I. -- Catanese, F. -- Pignatelli, R. ``Surfaces of General Type with Geometric
Genus Zero: A Survey" in Complex and differential geometry 1--48, Springer Proc.
Math., 8, Springer-Verlag, Heidelberg, 2011

\bibitem{BCP2} I.\,C. Bauer, F. Catanese and R. Pignatelli ``Complex surfaces of general type: some
recent progress" in Global Aspects of Complex Geometry, 1--58, Springer, Berlin, 2006.

\bibitem{Bea} Beauville, A. (1978). {\it{Surfaces algébriques complexes}}, Astérisque 54, Soc. Math. de France, Paris. English translation (1996): {\it{Complex algebraic surfaces}}, London Mathematical Society Student Texts Series, vol. 34 (2nd edn.). Cambridge: Cambridge University Press.

\bibitem{Cat1} Catanese, F. (2000). ``Fibred surfaces, varieties isogenous to a product and related moduli spaces," {\it{Amer. J. Math.}} 122, no. 1, 1--44.

\bibitem{Cat2} Catanese, F. (2003). ``Moduli spaces of surfaces and real structures," {\it{Ann. Math.}} 158, 577--592.

\bibitem{F15} B.\,T. Fairbairn, ``Recent work on Beauville surfaces, structures and groups" to appear in the `Proceedings of Groups St Andrews 2009 London Mathematical Society Lecture Note Series 387' (eds. C.\,M. Campbell, M.\,R. Quick, E.\,F. Robertson, C.\,M. Roney-Dougal, G.\,C. Smith and G. Traustason) Cambridge University Press, Cambridge, (2015)

\bibitem{sma} Eick, B. $-$ Besche, H. U. $-$ O'Brien, E. $-$ The GAP Team. (2018). {\it{The GAP Small Groups Library}}, Version 1.3; 09/04/2018 (https://www.gap-system.org/Packages/smallgrp.html).

\bibitem{Fue} Fuertes, Y. $-$ Jones, G. A. (2011). ``Beauville Surfaces and Finite Groups," {\it{J. Algebra}}, 340:1, 13--27.

\bibitem{Gon3} Gonz\'{a}lez-Diez, G. $-$ Torres-Teigell, D. (2012). ``Non-homeomorphic Galois conjugate Beauville structures on $PSL(2, p)$," {\it{Adv. Math.}} 229, no. 6, 3096--3122.

\bibitem{Gon4} Gonz\'{a}lez-Diez, G. $-$ Torres-Teigell, D. (2012). ``An introduction to Beauville surfaces via uniformization," in Jiang, Y. $-$ Mitra, S. (2012). {\it{Quasiconformal Mappings, Riemann Surfaces, and Teichmüller Spaces}}, Contemporary Mathematics 575. Providence: AMS, 123--151.

\bibitem{Jon1} Jones, G. A. (2013). ``Automorphism groups of Beauville surfaces," {\it{J. Group Theory}} 16, 353--381.


\bibitem{JSurv} Jones, G. A. ``Beauville surfaces and groups: a survey" in `Rigidity and Symmetry, Fields Institute Commumnications vol. 70' (eds. R. Connelly, A.\, I. Weiss and W. Whiteley) pp. 205--226, Springer 2014.


\bibitem{Jon4} Jones, G. A. $-$ Wolfart, J. (2016). {\it{Dessins d'Enfants on Riemann Surfaces}}. Cham: Springer.

\bibitem{gap} The GAP Group (2019). {\it{GAP $-$ Groups, Algorithms, and Programming}}, Version 4.10.2; 2019 (https://www.gap-system.org).

\bibitem{CombSurv} \v{S}ir\'{a}\v{n} J. ``How symmetric can maps on surfaces be?" in `Surveys in Combinatorics 2013' (Simon R. Blackburn, Stefanie Gerke and Mark Wildon eds.), London Mathathematical Socociety Lecture Note Series 409 (Cambridge University Press, Cambridge, 2013), 161--238.


\bibitem{Wei} Weisner, L. (1935). ``Some properties of prime-power groups," {\it{Trans. Amer. Math. Soc.}} 38, no. 3, 485--492.

    \bibitem{Wo} Wolfart J. ``ABC for polynomials, dessins d'enfants and uniformization ---
a survey" Elementare und analytische Zahlentheorie, Schr. Wiss. Ges. Johann
Wolfgang Goethe Univ. Frankfurt am Main, 20, Franz Steiner Verlag
Stuttgart, Stuttgart, 313--345 (2006)  http://www.math.uni-frankfurt.de/$\sim$wolfart/.

\end{thebibliography}
\end{document}